\newtheorem{definition}{Definition}[section]
\newtheorem{lemma}[definition]{Lemma}
\newtheorem{corollary}[definition]{Corollary}
\newtheorem{proposition}[definition]{Proposition}
\newtheorem{remark}[definition]{Remark}
\newenvironment{proof}{\vspace{3pt}\textsc{Proof:}\quad }
                       {\hfill \hbox{q.e.d.}\vspace{3pt}}
\def\B{{\mathcal B}}
\def\F{{\mathcal F}}
\def\P{{\mathcal P}}
\def\Q{{\mathcal Q}}
\def\sub{\subseteq }
\renewcommand{\land}{\mathrel\&}
\def\releps{\mathrel\epsilon}
\def\overlap{\, \between \,}
\newcommand{\sqoverlap}{>\mkern-13.5mu <}
\def\cov{\lhd}
\newcommand{\primconst}[1]
         {\mbox{\sf #1}}
\newcommand{\Pos}{{\primconst{Pos}}}
\newcommand{\fusim}{\cdot\mkern-4mu | \mkern-4mu\cdot}
\title{Constructive version of Boolean algebra}
\author{Francesco Ciraulo\footnote{Dipartimento
di Matematica, University of Padova, Via Trieste,
63 - I-35121 Padova, Italy, ciraulo@math.unipd.it}\ , Maria Emilia Maietti\footnote{Dipartimento
di Matematica, University of Padova, Via Trieste,
63 - I-35121 Padova, Italy, maietti@math.unipd.it}\ , Paola
Toto\footnote{Dipartimento di Matematica ``Ennio De Giorgi'',
University of Salento, Palazzo Fiorini, Via per Arnesano - I-73100
Lecce - Italy, paola.toto@unile.it}}
\date{}
\begin{document}

\maketitle
\begin{abstract}
The notion of overlap algebra introduced by G. Sambin provides a
constructive version of complete Boolean algebra. Here we first show
some  properties concerning overlap algebras: we prove that
the notion of overlap morphism corresponds classically to that of
map preserving arbitrary joins; we provide a description
of atomic set-based overlap algebras in the language of formal topology, thus giving a predicative characterization of  discrete locales;  we show that the power-collection of a set is the free overlap algebra join-generated from
the set.

Then, we generalize the concept of overlap algebra and overlap
morphism in various ways to provide constructive versions of the
category of Boolean algebras with
 maps preserving arbitrary existing joins.
\end{abstract}

\section{Introduction}
The classical Tarski's representation theorem (see \cite{tar35}),
asserting that atomic complete Boolean algebras coincide with
powersets, does not hold any longer if one works in a constructive
foundation. By a constructive  foundation we mean  one
governed by intuitionistic logic and enjoying
a semantics of extraction of programs from proofs (for a formal definition
see \cite{mtt}).
 The reason for the failure of Tarski's theorem is simple: when one drops the law of
excluded middle, powersets stop being Boolean algebras.
Hence, the following natural question arise:
\begin{enumerate}
\item
 what kind of algebraic structure characterizes
 powersets constructively?
\item
or better, what is the algebraic structure
corresponding classically to complete boolean algebras and including
constructive powersets  as examples?
\end{enumerate}
The first question was answered by Joyal and Tierney in \cite{joyal-tierney}
in a constructive impredicative foundation by giving a categorical  characterization of  {\it discrete locales},
while the second question was answered by Sambin in \cite{bp} within a constructive and predicative foundation
 by introducing the notion of {\it overlap algebra}. More precisely,
Sambin worked in the minimalist foundation introduced in  \cite{mtt,mtt2}
as a common core among the most relevant foundation for constructive mathematics. Since this foundation is predicative, in there one speaks of  {\it power-collections} and not of powersets, since   the power of a set, even that  of a singleton, is never a set predicatively.

An overlap algebra is a predicative locale
equipped with a notion of ``{\it overlap}'' between elements of the
algebra. The overlap relation is a positive way to express
when the meet of two elements is different from the bottom; among other things, it allows to define a suitable notion  of atom. Moreover, when overlap algebras are {\it set-based} (i.e. they have a set of
join-generators), they become in particular {\it formal topologies}~\cite{S87}.
The concept of formal topology was introduced by Martin-L{\"o}f and Sambin to describe locales predicatively, and it corresponds impredicatively to that of {\it overt} locale, and classically to that of locale.
But not all formal topologies are overlap algebras, because the overlap relation
is a proper strengthening of the  {\it positivity predicate} as shown by the fact  that overlap algebras coincide classically with complete Boolean algebras (see \cite{bp}). Actually,  in \cite{ci2} it is proven that  overlap
algebras coincide constructively with the collection of regular opens of formal topologies, thus giving a predicative version of the classical
representation theorem for complete Boolean algebras.
Finally, since Sambin in \cite{bp} proved that power-collections coincide with
 {\it atomic} set-based overlap algebras, we can conclude
that the notion of overlap algebra is the right constructive version of complete boolean algebra. Constructive examples of non-atomic overlap algebras are given in
\cite{cisa}.

In this paper, we show that Sambin's notion of overlap morphism in
\cite{bp} corresponds classically to that of map preserving
arbitrary joins, and hence the category of overlap
algebras in \cite{bp} is classically equivalent to that of complete
Boolean algebras with  maps preserving arbitrary joins. Furthermore,
by working in the minimalist foundation introduced in \cite{mtt2},
 we prove that the power-collection of all subsets of a set is
the free overlap algebra join-generated from the set.
Then,  we observe that we can present atomic set-based overlap algebras simply as suitable formal topologies, thus providing a predicative characterization of
 discrete locales  within the language of formal topology, instead of using the reacher language of overlap algebras.

Furthermore, we generalize the notion of overlap algebra and overlap
morphism to provide a constructive version of the category of (non
necessarily complete) Boolean algebras and maps preserving existing
joins. Basically we observe that join-completeness is not needed
when proving the equivalence between the category of overlap
algebras and that of complete Boolean algebras.

However, in order to represent
 boolean algebras constructively, we are faced with various
choices. Indeed we can  define different structures equipped with an
overlap relation with the same properties as the one given by Sambin
but related only to existing joins: we define a Boolean algebra with
overlap, called {\it o-Boolean algebra}, a Heyting algebra with
overlap,  called {\it o-Heyting algebra}, and a lattice with an
opposite (pseudocomplement)  and  overlap, called {\it oo-lattice}.
We show that such structures with overlap, for short {\it
o-structures}, classically (and impredicatively) are nothing but
Boolean algebras. Constructively, we show that they are all different and we study their mutual relationships.

As a future work we intend to investigate whether we can use our o-structures to obtain constructive versions of classical representation
theorems for  Boolean algebras.

\section{Some remarks on foundations}
When developing our theorems we assume to work in the extensional
set theory
 of the two-level minimalist foundation in \cite{mtt2}. This was designed
according to the principles given in \cite{mtt}.
The main characteristic is that our foundational set theory is {\it
constructive} and {\it predicative}. The fact that
our foundation is constructive means that
it is governed by intuitionistic logic, which does
not validate excluded middle, and it enjoys a realizability model
where to extract programs from proofs. The predicativity of our foundation
implies  that the power-collection of subsets of a set $X$,  written
$\P(X)$, is not a set, but a proper collection. Hence in
our set theory we have the notion of set and that of collection. To
keep predicativity,  a subset of a set $X$ can {\it only} be defined
by comprehension on a formula $\varphi(x)$, for $x\releps X$, with
quantifiers restricted to sets; such a subset is written $\{x\releps
X \ \mid\   \varphi(x)\}$.

It is worth mentioning that a  complete join-semilattice (also called suplattice) whose carrier is a
set is necessarily trivial  in a predicative constructive foundation
\cite{Cu10}. Therefore in such a setting we are
lead to define a complete join-semilattice as  a collection closed
under joins of \emph{set-indexed} families (we cannot assume
\emph{arbitrary} joins to exist, otherwise we fall again into a
trivial lattice).

As done  in \cite{S87}
 and \cite{bat}
we can make the definition of complete join-semilattice easier to
handle by restricting ourselves to the notion of \emph{set-based}
complete join-semilattice, given that all the relevant predicative
 examples known so far fall under this class.
A set-based complete join-semilattice is
 a semilattice that is
join-generated from a set(-indexed family) of elements, called
\emph{join-generators}. This means that each element
is the join of all the join-generators below it. Such a join exists if
 the collection of all join-generators
below an element form a subset (equivalently, a set-indexed
family). In order to achieve this, we need the order of the
semilattice to be defined by a formula containing only
quantifications over sets. For instance, the order in $\P(X)$ making it a set-based complete join-semilattice is written as
follows: $A\sub B$ iff $(\forall x\releps X)(x\releps A\Rightarrow
x\releps B)$.

Note that every set-based suplattice has binary meets, also predicatively. In fact, one can construct $x\wedge y$ has the join of all generators $a$ such that both $a\leq x$ and $a\leq y$ hold. This is well-defined in view of the discussion above.

In this paper we will deal with overlap algebras, which are in
particular complete join-semilattices, and we assumed them to be all
set-based.

Before starting, let us agree on some notation: $X$, $Y$, $S$ and
$T$ will always denote sets with elements $x$, $y$, $z$,\ldots $a$,
$b$, $c$, \ldots and subsets $A$, $B$, $C$, $D$, $E$,\ldots $U$,
$V$, $W$, $Z$. On the contrary, $\P$ and $Q$ will always stand for
collections whose elements will be written as $p$, $q$, $r$, \ldots.
Accordingly, we will use two different symbols to distinguish between the
two kinds of membership: $\releps$ for sets and subsets, $:$ for
collections.

\section{Overlap algebras}
The notion of overlap algebra   was introduced by Sambin in
\cite{bp}. It provides a constructive and predicative version of complete boolean
algebra and it includes constructive power-collections of sets as examples.
 Of course, the usual notion of
complete Boolean algebra is not apt to this purpose because, in a
constructive foundation, power-collections are not Boolean algebras
but only complete Heyting algebras. An overlap algebras is an enrichment of the notion of predicative locale with an {\it overlap relation}
used to define positively when the meet of two elements is different from the bottom.
In the case of power-collections, the notion of overlap between two
subsets $A,B\sub X$, denoted by $A\overlap B$, expresses {\it
inhabitedness} of their intersection and is therefore defined as
follows:
\begin{equation}\label{eq. def. overlap}
A\overlap B\ \stackrel{def}{\Longleftrightarrow}\ (\exists x\releps
X)(x\releps A\cap B)\ .
\end{equation}
Moreover, classically, any complete Boolean algebra $\cal B$ is  equipped
with an overlap relation defined as $x\wedge y\neq 0$ for
$x,y\releps {\cal B}$ (see \cite{bp}).
 Thus the notion of overlap is
a constructive positive way to express {\it inhabitedness} of the
meet of two elements. As we will see, it allows to define the notion of atom.
Atomic set-based overlap algebras coincide constructively with power-collections
of sets. Therefore the notion of atomic set-based overlap algebra
 provides a predicative version of the categorical characterization of discrete locales in \cite{joyal-tierney} within the language of overlap algebras.
Here, after reviewing some basic facts on overlap algebras,  we will describe
atomic set-based overlap algebras in terms of formal topologies, thus providing
a predicative version of
discrete locales within the language of formal topology. We then end by showing  that
the power
collection of a set is the free overlap algebra join-generated from
the set.

\subsection{Definition and basic properties}

\begin{definition}\label{def. o-algebras}
An \emph{overlap algebra} (\emph{o-algebra} for short) is a triple
$(\P,\leq,\sqoverlap)$ where $(\P,\leq)$ is a suplattice and
$\sqoverlap$ is a binary relation on $\mathcal{P}$ satisfying the
following properties:
  \begin{itemize}
    \item $p\sqoverlap q\ \Rightarrow\ q\sqoverlap p$\hfill (symmetry)
    \item $p\sqoverlap q\ \Rightarrow\ p\sqoverlap (p\wedge q)$\hfill (meet closure)
    \item $p\sqoverlap\bigvee_{i\releps I}q_i\ \Longleftrightarrow\ (\exists i\releps I)(p\sqoverlap q_i)$\hfill (splitting of join)
    \item $(\forall r:\mathcal{P})(r\sqoverlap p\ \Rightarrow\ r\sqoverlap q)\ \Longrightarrow\ p\leq q$\hfill(density)
  \end{itemize} (for any $p$ and $q$ in $\P$).
\end{definition}

We say that an o-algebra $(\P,\leq,\sqoverlap)$ is \emph{set-based}
if the join-semilattice $(\P,\leq)$ admits a \emph{base}, that is, a
set-indexed family of generators (with respect to the operation of
taking set-indexed joins), called \emph{join-generators}. We agree
to make no notational distinction between the base and its index
set; thus $S$ is a base for $\P$ if $p=\bigvee\{a\releps S$ $|$
$a\leq p\}$ for any $p:\P$. For the reasons mentioned in section 2,
we shall assume each o-algebra to be set-based.

It is easily seen that all quantifications over the elements of an
o-algebra $\P$ can be reduced to the base. For instance, the
``density'' axiom in definition \ref{def. o-algebras} is equivalent
to
\begin{equation}\label{eq. set-based density}(\forall
a\releps S)(a\sqoverlap p\ \Rightarrow\ a\sqoverlap q)\
\Longrightarrow\ p\leq q\end{equation} ($S$ being a base). Just to
get acquainted with the axioms, let us prove that ``density''
implies (\ref{eq. set-based density}) (the other direction being
trivial). It is enough to prove that $(\forall a\releps
S)(a\sqoverlap p$ $\Rightarrow$ $a\sqoverlap q)$ $\Longrightarrow$
$(\forall r:\mathcal{P})(r\sqoverlap p$ $\Rightarrow$ $r\sqoverlap
q)$. Take $r:\P$ such that $r\sqoverlap p$. Since
$r=\bigvee\{a\releps S$ $|$ $a\leq r\}$ and $\sqoverlap$ splits
$\bigvee$, there exists $a\releps S$ such that $a\leq r$ and
$a\sqoverlap p$. By hypothesis, we get $a\sqoverlap q$ and hence
$r\sqoverlap q$ by the splitness property again.

For every set $X$, the structure $(\P(X),\sub,\overlap)$, where $\overlap$
is defined as in
equation (\ref{eq. def. overlap}), is an o-algebra whose singletons form a base. In
addition, we shall see in the following sections that $\P(X)$ is
also atomic and free over $X$. Here below, we list some useful
properties of o-algebras. Detailed proofs can be found in \cite{bp}, \cite{ci2} and \cite{cisa}.

\begin{proposition}
\label{properties o-algebra} Let $\P$ be an o-algebra with base $S$; then the following hold:
\begin{enumerate}
    \item $p\sqoverlap r\ \land\ r\leq q\ \Longrightarrow\ p\sqoverlap q$
    \item $p=q\ \Longleftrightarrow\ (\forall a\releps S)(a\sqoverlap p\
    \Leftrightarrow\ a\sqoverlap q)$
    \item $(p\wedge r)\sqoverlap q\ \Longleftrightarrow\ p\sqoverlap (r\wedge q)$
    \item $p\sqoverlap q\ \Leftrightarrow\ (p\wedge q)\sqoverlap(p\wedge q)\
    \Leftrightarrow\ (\exists a\releps S)(a\leq p\wedge
    q\ \land\ a\sqoverlap a)$
    \item $\neg(0\sqoverlap 0)$
    \item $\neg(p\sqoverlap q)$ $\Longleftrightarrow$ $p\wedge q=0$
\end{enumerate}
for every $p$, $q$, $r$ in $\P$.
\end{proposition}
\begin{proof} (1) From
$p\sqoverlap r$, it follows that $p\sqoverlap (r\vee q)$ by
splitness; but $r\vee q = q$ because $r\leq q$. (2) By density. (3) By meet closure and item 1. (4) By meet closure, symmetry and
splitness of join. (5) By splitness of join, because $0$ is the join of the empty family.
(6) If $p\wedge q=0$, then $p\sqoverlap q$ would contradict item 5
(by item 4). To prove the other direction we use the density axiom:
for $a\releps S$, if $a\sqoverlap (p\wedge q)$, then $p\sqoverlap q$
(by symmetry and items 4 and 1) which contradicts the assumption
$\neg(p\sqoverlap q)$, so $a\sqoverlap 0$ (\emph{ex falso quodlibet}).
\end{proof}

As a corollary, we get that all the axioms in Definition \ref{def.
o-algebras} can be reversed. In particular, the order relation can
be considered as a defined notion with the overlap relation as
primitive (thanks to the axiom ``density'' and its converse). Not surprisingly, most
of the times an inequality has to be proven, we shall apply
``density'' (as we have already done in the proof of item 6);
similarly, for equalities we shall often use item 2 above.

Note that in a set-based o-algebra the overlap is {\it uniquely} determined.
First, observe that by density  and item 4 of prop.~\ref{properties o-algebra}
 $p= \bigvee\{a\releps
S$ $|$ $a\leq p\ \&\  a\sqoverlap a \}$ holds for all $p$ in  a set-based o-algebra $\P$ with base $S$. 
Suppose that $\sqoverlap_1$ and $\sqoverlap_2$ are overlap relations in $\P$. We now show that $\sqoverlap_1$ and 
$\sqoverlap_2$ are equivalent. To this purpose it is enough to prove that $\sqoverlap_1$ implies $\sqoverlap_2$.
Suppose $p \sqoverlap_1 q$, then by meet closure $p \sqoverlap_1 p\wedge q$; now
from $p\wedge q= \bigvee\{a\releps
S$ $|$ $a\leq p\wedge q\ \&\  a\sqoverlap_2 a \}$
by splitness of join we deduce that
there exists $a\releps S$ such that  $p \sqoverlap_1 a$,
 $a\leq  p\wedge q$,  $a\sqoverlap_2 a$, and by item 1 of  prop.~\ref{properties o-algebra} and symmetry we conclude
 $p \sqoverlap_2 q$.

The intuition underlying the relation $\sqoverlap$ suggests that
there should be deep links between it and the positivity predicate
in Formal Topology. The notion of formal topology was introduced by
Martin-L{\"o}f and Sambin to describe locales predicatively; it
corresponds impredicatively to that of {\it open} locale, and
classically to that of locale (see \cite{S87,Joh,Sam2002}). Here we
give a definition of a formal topology which is more suitable for
our purposes, though  equivalent to that in \cite{cssv}.

\begin{definition}
 A \emph{formal topology} ${\P}$ is, first of all, a \emph{formal cover}, namely
 a set-based suplattice in which binary meets distribute over set-indexed
joins, that is:\begin{equation}\label{eq. infinite distributivity}
p\wedge\bigvee_{i\releps I}q_i\quad=\quad\bigvee_{i\releps I}(p\wedge q_i)\
.\end{equation} If $S$ is a base, $a\releps S$ and $U\sub S$, one usually writes
$$a\cov U \qquad \mbox{ for }\qquad a\leq\bigvee\{a\releps S\ |\ a\releps U\}\ .$$

Moreover, $\P$  is equipped with
 a \emph{positivity predicate}, namely a unary predicate $\Pos(p)$ for $p$ in $\P$
such that: for $p, q$ and $p_i$ (for $i\releps I$) in $\cal P$
\\

\begin{tabular}{ll}
$\Pos(p)$ $\land$ $p\leq q$ $\Longrightarrow$ $\Pos(q)$&
$\Pos\big(\bigvee_{i\releps I}q_i\big)$ $\Longrightarrow$ $(\exists\,
i\releps I)$ $\Pos(q_i)$\\[5pt]
 $p\leq\bigvee\{a\releps S\ |\ a\leq p\ \land \ \Pos(a)\, \}$
& (the so-called ``positivity axiom'').
\end{tabular}
\end{definition}
In the following we simply say that an element $p$ of a formal topology $\P$ is \emph{positive} if it satisfies $\Pos(p)$.
\begin{proposition}
\label{prop o-lagebras vs overt locales} Every set-based o-algebra is a formal topology with the positivity predicate defined as
$p\sqoverlap p$.
\end{proposition}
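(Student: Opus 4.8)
The plan is to verify separately the two components of the definition of formal topology: that $\P$ is a formal cover, and that the predicate $\Pos(p):=p\sqoverlap p$ is a positivity predicate. Since a set-based o-algebra is, by definition, a set-based suplattice, the only ingredient missing for $\P$ to be a formal cover is the infinite distributivity law~(\ref{eq. infinite distributivity}). This is the heart of the argument.

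For distributivity, the inequality $\bigvee_{i\releps I}(p\wedge q_i)\leq p\wedge\bigvee_{i\releps I}q_i$ is immediate from $p\wedge q_i\leq p$ and $p\wedge q_i\leq q_i\leq\bigvee_{i\releps I}q_i$. For the reverse inequality I would argue by density: it suffices to show that every $r:\P$ with $r\sqoverlap(p\wedge\bigvee_{i\releps I}q_i)$ also satisfies $r\sqoverlap\bigvee_{i\releps I}(p\wedge q_i)$. Applying item~3 of Proposition~\ref{properties o-algebra} rewrites $r\sqoverlap(p\wedge\bigvee_{i\releps I}q_i)$ as $(r\wedge p)\sqoverlap\bigvee_{i\releps I}q_i$; splitting of join then yields some $i$ with $(r\wedge p)\sqoverlap q_i$, and item~3 again turns this into $r\sqoverlap(p\wedge q_i)$; a final use of splitting of join gives $r\sqoverlap\bigvee_{i\releps I}(p\wedge q_i)$, as required. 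Combining the two inequalities yields~(\ref{eq. infinite distributivity}).

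It then remains to check the three axioms for $\Pos(p)=p\sqoverlap p$. Monotonicity ($p\sqoverlap p$ and $p\leq q$ imply $q\sqoverlap q$) follows from item~1 of Proposition~\ref{properties o-algebra} together with symmetry, applied twice. For the join axiom, from $(\bigvee_{i\releps I}q_i)\sqoverlap(\bigvee_{i\releps I}q_i)$ two applications of splitting of join, with symmetry in between, produce indices $i,j$ with $q_i\sqoverlap q_j$; by item~4 this gives $(q_i\wedge q_j)\sqoverlap(q_i\wedge q_j)$, and since $q_i\wedge q_j\leq q_i$ the monotonicity just established yields $\Pos(q_i)$, hence $(\exists i\releps I)\,\Pos(q_i)$. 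Finally, the positivity axiom $p\leq\bigvee\{a\releps S\ |\ a\leq p\ \land\ \Pos(a)\}$ is precisely the identity $p=\bigvee\{a\releps S\ |\ a\leq p\ \land\ a\sqoverlap a\}$ recorded earlier in this section as a consequence of density and item~4, so nothing new is needed.

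I expect the distributivity inequality to be the only genuinely substantive step; all the positivity axioms reduce to direct bookkeeping of the properties in Proposition~\ref{properties o-algebra}. The recurring technique — reducing an inequality to a statement about $\sqoverlap$ via density, then manipulating it through item~3 and splitting of join — is exactly the reasoning style already illustrated after Proposition~\ref{properties o-algebra}, so the main care is simply in matching the factors correctly when invoking item~3.
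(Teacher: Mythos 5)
Your proposal is correct and follows essentially the same route as the paper: the heart of both arguments is the identical chain of equivalences $r\sqoverlap(p\wedge\bigvee_{i}q_i)\Leftrightarrow(r\wedge p)\sqoverlap\bigvee_i q_i\Leftrightarrow(\exists i)\,(r\wedge p)\sqoverlap q_i\Leftrightarrow(\exists i)\,r\sqoverlap(p\wedge q_i)\Leftrightarrow r\sqoverlap\bigvee_i(p\wedge q_i)$, the only cosmetic difference being that you conclude via density plus the trivial inequality $\bigvee_i(p\wedge q_i)\leq p\wedge\bigvee_i q_i$, while the paper concludes both directions at once from item~2 of Proposition~\ref{properties o-algebra}. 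Your verification of the three positivity axioms (which the paper leaves to the reader with a hint) is also correct, and your appeal to the identity $p=\bigvee\{a\releps S\ |\ a\leq p\ \land\ a\sqoverlap a\}$ for the positivity axiom matches the paper's intent exactly.
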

\begin{proof} Firstly, we claim that every o-algebra $\P$ is, in fact, a formal cover;
we need only to prove (\ref{eq. infinite distributivity}). For all
$r:\P$ the following hold: $r\sqoverlap (p\wedge\bigvee_{i\releps
I}q_i)$ iff $(r\wedge p)\sqoverlap\bigvee_{i\releps I}q_i$ iff $(r\wedge
p)\sqoverlap q_i$ for some $i\releps I$ iff $r\sqoverlap (p\wedge
q_i)$ for some $i\releps I$ iff $r\sqoverlap\bigvee_{i\releps I}(p\wedge
q_i)$. This is sufficient by (2) of proposition~\ref{properties o-algebra}.

Now, let us put $\Pos(p)\Leftrightarrow(p\sqoverlap p)$. We claim that
$\Pos$ is a positivity predicate. All the requested properties are
quite easy to prove (hint: use density to prove the positivity axiom).
\end{proof}

The notion of o-algebra is stronger than that of formal topology. Indeed, first of all,  for a formal topology
$\P$ the binary
predicate $\Pos(x\wedge y)$ satisfies all the axiom of an overlap
relation but density. In other words, a formal topology is an
o-algebras if and only if its positivity predicate $\Pos$ satisfies
the following: $$(\forall a\releps S)\big(\Pos(a\wedge
p)\Rightarrow\Pos(a\wedge q)\big)\Longrightarrow p\leq q$$ for every
$p$ and $q$.
Moreover, in the next
we are going to prove that o-algebras coincide classically with complete Boolean
algebras and hence, of course, they are ``fewer'' than formal topologies, or locales.

\begin{remark}
Since we are assuming in this paper that every o-algebra is set-based,
it follows that every
o-algebra is a complete Heyting algebra, because it has an  implication
 defined by: $p\rightarrow q$ = $\bigvee\{a\releps S$ $|$ $a\wedge p\leq
q\}$, where $S$ is a base. The validity of this fact
is one of the advantages of working with set-based structures.
\end{remark}

\subsection{O-algebras classically}

Building on item 6 of Proposition \ref{properties o-algebra}, we now
state two lemmas, essentially due to Sambin (see \cite{bp}), which
further clarify the relationship between the overlap relation
$p\sqoverlap q$ and its negative counterpart $p\wedge q\neq 0$.

\begin{lemma}\label{lemma 1}
Classically, in any o-algebra, $p\sqoverlap q$ is tantamount to
$p\wedge q\neq 0$.
\end{lemma}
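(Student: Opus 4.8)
The plan is to prove this equivalence classically by establishing both implications, using item 6 of Proposition~\ref{properties o-algebra} as the crucial bridge between the overlap relation and the negated meet condition.

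The plan is to obtain both implications directly from item 6 of Proposition~\ref{properties o-algebra}, which already asserts $\neg(p\sqoverlap q)\Leftrightarrow p\wedge q=0$. The forward direction $p\sqoverlap q\Rightarrow p\wedge q\neq 0$ is in fact constructive: assuming $p\sqoverlap q$, if we had $p\wedge q=0$ then item 6 would yield $\neg(p\sqoverlap q)$, contradicting the hypothesis; hence $p\wedge q\neq 0$.

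For the converse $p\wedge q\neq 0\Rightarrow p\sqoverlap q$, I would negate both sides of the equivalence in item 6 to get $\neg\neg(p\sqoverlap q)\Leftrightarrow\neg(p\wedge q=0)$, that is, $\neg\neg(p\sqoverlap q)\Leftrightarrow(p\wedge q\neq 0)$. From the hypothesis $p\wedge q\neq 0$ we thus derive $\neg\neg(p\sqoverlap q)$, and a single application of classical double-negation elimination then gives $p\sqoverlap q$.

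The only non-constructive ingredient is precisely this last step, and it is also the only point deserving any attention. Everything else, including item 6 itself, is available constructively, so the classical hypothesis is needed exactly to cross from $\neg\neg(p\sqoverlap q)$ back to $p\sqoverlap q$. This is why the lemma must be stated \emph{classically}: the implication $p\wedge q\neq 0\Rightarrow p\sqoverlap q$ expresses that a merely negative, non-emptiness condition on the meet suffices to establish the positive overlap relation, which cannot hold without excluded middle. Consequently there is no genuine obstacle here; the substantive work was already carried out in establishing item 6, and the present lemma amounts to reading that equivalence through the lens of double-negation elimination.
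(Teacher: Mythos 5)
Your proposal is correct and follows exactly the paper's approach: the paper's entire proof is ``From Proposition~\ref{properties o-algebra}, item 6,'' and you have simply spelled out the details, correctly isolating double-negation elimination (to pass from $\neg\neg(p\sqoverlap q)$ to $p\sqoverlap q$) as the only classical step.
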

\begin{proof}
From Proposition~\ref{properties o-algebra}, item 6.
\end{proof}

\begin{lemma}\label{lemma 2} Let $(\P,\wedge,0,-)$ be a
$\wedge$-semilattice with bottom and with a
pseudocomplement, i.e. a unary operation
$-$ such that $p\leq-q$ if and only if $p\wedge q=0$. The following
are equivalent constructively: \begin{enumerate}
\item $(\forall p,q:\P)\big((\forall r:\P)(r\wedge p\neq 0$
$\Rightarrow$ $r\wedge q\neq 0)$ $\Longrightarrow$ $p\leq q\big)$
(\emph{negative density}); \item $(\forall p:\P)(p=--p)$ $\land$
$(\forall p,q:\P)\big(\neg(p\neq q)$ $\Rightarrow$ $p=q\big)$.
\end{enumerate}
\end{lemma}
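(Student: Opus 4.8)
The plan is to prove both implications, relying on a few elementary consequences of the pseudocomplement adjunction $p\leq -q \iff p\wedge q=0$. First I would record the facts used repeatedly: taking $p:=-q$ in the adjunction gives $q\wedge -q=0$ for every $q$ (since $-q\leq -q$ holds trivially), and taking $p:=q$, $q:=-q$ then yields $q\leq --q$. Together with monotonicity of $\wedge$, these are the only order-theoretic ingredients needed; note that $--q\leq q$ (hence $q=--q$) is \emph{not} available for free, and is precisely what the extra hypotheses must buy us.

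For the direction (1)$\Rightarrow$(2), I would first establish $p=--p$. Since $p\leq --p$ already holds, it suffices to prove $--p\leq p$, and for this I would invoke negative density: it is enough to check $(\forall r:\P)(r\wedge --p\neq 0 \Rightarrow r\wedge p\neq 0)$. This I would verify through the (constructively valid) contrapositive implication $r\wedge p=0 \Rightarrow r\wedge --p=0$; indeed $r\wedge p=0$ gives $r\leq -p$, whence $r\wedge --p \leq -p\wedge --p=0$. For the second conjunct of (2), assume $\neg(p\neq q)$, that is $\neg\neg(p=q)$. By antisymmetry it suffices to show $p\leq q$ (the converse being entirely symmetric), and again negative density reduces this to $(\forall r:\P)(r\wedge p\neq 0\Rightarrow r\wedge q\neq 0)$. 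Fixing $r$ with $r\wedge p\neq 0$ and assuming $r\wedge q=0$, one sees that $p=q$ would force $r\wedge p=r\wedge q=0$, a contradiction; hence $\neg(p=q)$, which contradicts $\neg\neg(p=q)$. Thus $r\wedge q\neq 0$, completing this direction.

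For the direction (2)$\Rightarrow$(1), assume both $p=--p$ for all $p$ and the double-negation stability of equality, and suppose $(\forall r:\P)(r\wedge p\neq 0\Rightarrow r\wedge q\neq 0)$; the goal is $p\leq q$. The key move is to convert the hypothesis into its positive form $(\forall r:\P)(r\wedge q=0\Rightarrow r\wedge p=0)$: from $r\wedge q=0$ the contrapositive of the hypothesis yields $\neg\neg(r\wedge p=0)$, and stability of equality (applied to $r\wedge p$ and $0$) collapses this to $r\wedge p=0$. Instantiating $r:=-q$ and using $q\wedge -q=0$ then gives $-q\wedge p=0$, i.e. $p\leq --q$, which by $q=--q$ reads $p\leq q$. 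I expect the main obstacle to be precisely this last direction: one must resist arguing by contraposition directly (which is not constructively reversible), and instead use double-negation stability as the single tool that legitimately discharges the double negations produced by the contrapositive, after which the instantiation $r:=-q$ together with $--q=q$ does the rest.
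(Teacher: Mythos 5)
Your proof is correct and takes essentially the same approach as the paper: in both, (1)$\Rightarrow$(2) is obtained by feeding negative density with contrapositive implications between statements of the form $r\wedge(\cdot)=0$, and (2)$\Rightarrow$(1) converts the hypothesis into its positive form $r\wedge q=0\Rightarrow r\wedge p=0$ via stability of equality and then instantiates $r:=-q$ together with $--q=q$. The only cosmetic difference is in proving $--p\leq p$: you directly verify $r\wedge p=0\Rightarrow r\wedge--p=0$ and contrapose, whereas the paper routes the same step through the identity $---p=-p$.
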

\begin{proof} Assume $1.$ and let $r$ be
such that $r\wedge --p\neq 0$, that is, $r\nleq---p$. Since
$---p=-p$, this is tantamount to say that $r\nleq-p$, that is,
$r\wedge p\neq 0$. Since $r$ is arbitrary, we get $--p\leq p$ by
negative density. Hence $--p=p$ for any $p:\P$. Assume now
$\neg(p\neq q)$; we claim that $p=q$. It is enough to prove that
$p\leq q$; so, by negative density, we must check that $r\wedge
p\neq 0$ implies $r\wedge q\neq 0$: this is easy because if it were
$r\wedge q=0$, then it would be $p\neq q$ (since $r\wedge p\neq 0$),
contrary to the assumption $\neg(p\neq q)$.

Vice versa, assume $2.$ and note that the implication $r\wedge p\neq 0$
$\Rightarrow$ $r\wedge q\neq 0$ can be rewritten as $\neg(r\wedge q\neq 0)$
$\Rightarrow$ $\neg(r\wedge p\neq 0)$ which, by hypothesis, is
equivalent to $r\wedge q=0$ $\Rightarrow$ $r\wedge p=0$. Thus the
antecedent of negative density becomes $(\forall r:\P)(q\leq-r$
$\Rightarrow$ $p\leq-r)$; the latter gives in particular (even is
equivalent to) $p\leq q$ (choose $r=-q$ and use $--q=q$).
\end{proof}

We think it is illuminating to compare o-algebras with complete
Boolean algebras (this result was first suggested by Steve Vickers).

\begin{proposition}
\label{prop. o-algebras cBa's} Classically,
every complete Boolean algebra (with $0\neq 1$) is an o-algebra
(with $1\sqoverlap 1$), where $p\sqoverlap q$ $\Leftrightarrow$
$p\wedge q\neq 0$.

Classically and impredicatively, every
o-algebra (with $1\sqoverlap 1$) is a complete Boolean algebra (with
$0\neq 1$).
\end{proposition}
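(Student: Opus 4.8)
The plan is to treat the two directions separately, leaning on Lemmas~\ref{lemma 1} and~\ref{lemma 2} to dispatch the density axiom, which is the only delicate point in each case. For the first direction, I would start from a complete Boolean algebra $\mathcal{B}$ with $0\neq 1$ and set $p\sqoverlap q\Leftrightarrow p\wedge q\neq 0$. Since $\mathcal{B}$ is a complete lattice it is in particular a suplattice, so it remains to verify the four o-algebra axioms. Symmetry and meet closure are immediate from commutativity of $\wedge$ and from $p\wedge(p\wedge q)=p\wedge q$. For the splitting of join I would invoke infinite distributivity $p\wedge\bigvee_{i}q_i=\bigvee_{i}(p\wedge q_i)$, valid in any complete Boolean algebra, together with the fact that a join vanishes exactly when all its summands vanish; the forward implication (a nonzero join forces some nonzero summand) is precisely where classical logic enters, through De Morgan. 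For density, observe that with this $\sqoverlap$ the axiom is literally the \emph{negative density} of Lemma~\ref{lemma 2}: since in a Boolean algebra the complement is involutive ($--p=p$) and equality is classically stable ($\neg(p\neq q)\Rightarrow p=q$), condition~2 of Lemma~\ref{lemma 2} holds, hence so does negative density. Finally $1\sqoverlap 1$ just records $1\wedge 1=1\neq 0$.

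For the converse, let $\mathcal{P}$ be an o-algebra with $1\sqoverlap 1$. By Proposition~\ref{prop o-lagebras vs overt locales}, together with the Remark following it, $\mathcal{P}$ is a complete Heyting algebra: a suplattice in which binary meets distribute over set-indexed joins, carrying a pseudocomplement $-p=\bigvee\{a\releps S\mid a\wedge p=0\}$ satisfying $p\leq -q\Leftrightarrow p\wedge q=0$. Working classically, Lemma~\ref{lemma 1} identifies $p\sqoverlap q$ with $p\wedge q\neq 0$, so the density axiom of $\mathcal{P}$ becomes exactly the negative density of Lemma~\ref{lemma 2}. Applying Lemma~\ref{lemma 2} in the direction $1\Rightarrow 2$ yields $--p=p$ for every $p$, and a complete Heyting algebra satisfying this double-negation law is a complete Boolean algebra. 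For $0\neq 1$, note that if $0=1$ then $1\sqoverlap 1$ would read $0\sqoverlap 0$, contradicting item~5 of Proposition~\ref{properties o-algebra}; this step needs no classical logic.

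The main obstacle in both directions is the density/negative-density equivalence, which is exactly what Lemmas~\ref{lemma 1} and~\ref{lemma 2} are engineered to bridge; once one recognizes that the $\sqoverlap$-density axiom coincides classically with negative density, the involutivity $--p=p$ does the real work, being equivalent to the algebra's being Boolean. A secondary subtlety is foundational rather than mathematical: the qualifier \emph{impredicatively} in the second direction signals that a set-based o-algebra is a priori a proper collection, so one must treat its carrier as a genuine set in order to regard it as a complete Boolean algebra in the classical sense.
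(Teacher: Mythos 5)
Your proposal is correct and follows essentially the same route as the paper's proof: symmetry and meet closure are immediate, splitting of join reduces to the classically valid De Morgan step $\neg(\forall i)(p\wedge q_i=0)\Leftrightarrow(\exists i)(p\wedge q_i\neq 0)$, density is dispatched via Lemmas~\ref{lemma 1} and~\ref{lemma 2}, and the converse uses the complete-Heyting-algebra structure plus $--q=q$ and the powerset axiom. Your explicit treatment of $0\neq 1$ via item~5 of Proposition~\ref{properties o-algebra} is a small detail the paper leaves implicit, but it changes nothing in the argument.
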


\begin{proof} (See \cite{bp} and \cite{ci2}). Start with a complete
Boolean algebra and (consider Lemma \ref{lemma 1}) define
$p\sqoverlap q$ as $p\wedge q\neq 0$. This relation trivially
satisfies symmetry and meet closure. Density follows from the
previous Lemma. Finally, splitting of join can be easily reduced to
$\neg(\forall i\releps I)(p\wedge q_i= 0)$ $\Leftrightarrow$ $(\exists
i\releps I)(p\wedge q_i\neq 0)$ which is classically valid.

Conversely, every set-based o-algebra is a complete Heyting algebra,
as we know. Moreover, by Lemma \ref{lemma 1} and Lemma \ref{lemma
2}, we get that $-- q= q$, for any $q$. Finally, the powerset axiom
allows one to consider the carrier of an o-algebra as a set, as required
by the usual definition of complete Boolean algebra.
\end{proof}

\subsection{Morphisms between o-algebras}

\begin{definition} Let $f:\P\rightarrow\Q$ and
$g:\Q\rightarrow\P$ be two maps between o-algebras. We say that $f$
and $g$ are \emph{symmetric}\footnote{This notion is classically
equivalent to that of ``conjugate'' functions studied in
\cite{tarski}.} and we write $f\fusim g$ if
$$f(p)\sqoverlap q\ \Longleftrightarrow\ p\sqoverlap g(q)$$ for all
$p:\P$ and $q:\Q$.
\end{definition}

In \cite{bp} Sambin proposed and widely justified the following
definition of morphism between o-algebras.

\begin{definition}\label{def. o-morphism}
An \emph{overlap morphism} (\emph{o-morphism}) from an o-algebra
$\P$ to an o-algebra $\Q$ is a map $f:\P\rightarrow\Q$ such that
there exist $f^-,f^*:\Q\rightarrow\P$ and $f^{-*}:\P\rightarrow\Q$
satisfying the following conditions:
\begin{enumerate}
    \item $f(p)\leq q\ \Longleftrightarrow\ p\leq f^*(q)$\hfill ($f\dashv f^*$)
    \item $f^-(q)\leq p\ \Longleftrightarrow\ q\leq f^{-*}(p)$\hfill ($f^-\dashv f^{-*}$)
    \item $f(p)\sqoverlap q\ \Longleftrightarrow\ p\sqoverlap f^-(q)$\hfill ($f\fusim f^-$)
\end{enumerate} (for all $p:\P$ and $q:\Q$).
\end{definition}
Easily, the identity map $id_\P$ on $\P$ is an o-morphism (with
$id_\P^-$ $=$ $id_\P^*$ $=$ $id_\P^{-*}$ $=$ $id_\P$); moreover, the
composition $f\circ g$ of two o-morphisms is an o-morphism too (with
$(f\circ g)^-=g^-\circ f^-$, $(f\circ g)^*=g^*\circ f^*$ and
$(f\circ g)^{-*}=f^{-*}\circ g^{-*}$).

\begin{definition}
O-algebras and o-morphisms form a category, called $\mathbf{OA}$.
\end{definition}

Here we present an example of o-morphism which is actually the
motivating one. For $X$ and $Y$ sets, it is possible (see \cite{bp})
to characterize o-morphisms between the overlap algebras $\P(X)$ and
$\P(Y)$ in terms of binary relations between $X$ and $Y$. For any
relation $R$ between $X$ and $Y$, consider its \emph{existential
image} defined by
$$R(A)\quad\stackrel{def}{=}\quad\{y\releps Y\ |\ (\exists x\releps
X)(x\,R\,y\ \land\ x\releps A)\}$$ (for $A\sub X$). It is easy to
check that the operator $R$ is an o-morphism from $\P(X)$ to $\P(Y)$
with $R^-$, $R^*$ and $R^{-*}$ defined, respectively, by:
\begin{eqnarray}\label{eq. induced operators}
R^-(B) & \stackrel{def}{=} & \{x\releps X\ |\ (\exists y\releps
Y)(x\,R\,y\ \land\
y\releps B)\}\nonumber\\
R^*(B) & \stackrel{def}{=} & \{x\releps X\ |\ (\forall y\releps
Y)(x\,R\,y\ \Rightarrow\
y\releps B)\}\\
R^{-*}(A) & \stackrel{def}{=} & \{y\releps Y\ |\ (\forall x\releps
X)(x\,R\,y\ \Rightarrow\ x\releps A)\}\nonumber
\end{eqnarray} (for any $A\sub X$ and $B\sub Y$).  Vice
versa, every o-morphism $f:\P(X)\rightarrow\P(Y)$ is of this kind: define $x\,R\,y$ as $y\releps f(\{x\})$. This
correspondence is biunivocal and defines a full embedding of the
category of sets and relations  into $\mathbf{OA}$.

 The conditions of an o-morphism
 simplify in a relevant way in the case its domain and codomain are set-based o-algebras. First of all,
one should recall from category theory that, considering $f$ and $f^-$
as functors
between the poset categories $\P(X)$ and $\P(Y)$ (since they are monotone functions),  the functions $f^*$ and $f^{-*}$ are the right adjoints of them respectively, and hence, if exists, they are  uniquely determined by $f$  and
$f^-$. Moreover,  $f^*$
(respectively $f^{-*}$) exists if and only if $f$ (respectively
$f^-$) preserves all joins. This is true in an impredicative
setting, but also predicatively at least for set-based structures.
In the latter case $f^*(q)$ can be defined as $\bigvee\{a\releps S$
$|$ $f(a)\leq q\}$ (and similarly for $f^{-*}$). Before going on,
let us prove a few properties about symmetric functions.

\begin{proposition}\label{prop. properties symmetric}
Let $f$ be a map on the o-algebra $\P$ to the o-algebra $\Q$ such that there exists $g:\Q\rightarrow\P$ with $f\fusim g$; then:
\begin{enumerate}
  \item $g$ is unique; that is, if $h:\Q\rightarrow\P$ satisfies $f\fusim h$, then $h=g$;
  \item $g$ is determined by $f$, in the sense that for any $q:\Q$
  \begin{equation}\label{eq. def. f^-}g(q)=\bigvee\{a\releps S\ |\ (\forall x\releps S)(x\sqoverlap a\
  \Rightarrow\ f(x)\sqoverlap q)\}\end{equation}
  where $S$ is a base for $\P$.
\end{enumerate}
\end{proposition}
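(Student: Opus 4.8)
The plan is to establish part 1 first and then derive part 2 from the uniqueness together with a direct verification. For part 1, suppose that both $f\fusim g$ and $f\fusim h$ hold. Unfolding the definition of $\fusim$ for each of them, I would obtain $p\sqoverlap g(q)\ \Leftrightarrow\ f(p)\sqoverlap q\ \Leftrightarrow\ p\sqoverlap h(q)$ for all $p:\P$ and $q:\Q$. Fixing $q$ and letting $p$ range over the base $S$, item 2 of Proposition~\ref{properties o-algebra} immediately yields $g(q)=h(q)$. Since $q$ was arbitrary, $g$ is unique.

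For part 2, write $g_0(q)$ for the right-hand side of~(\ref{eq. def. f^-}). By part 1 it suffices to show $g(q)=g_0(q)$ for every $q:\Q$, and by item 2 of Proposition~\ref{properties o-algebra} this reduces to checking $b\sqoverlap g(q)\ \Leftrightarrow\ b\sqoverlap g_0(q)$ for every $b\releps S$. On the left, $f\fusim g$ gives $b\sqoverlap g(q)\ \Leftrightarrow\ f(b)\sqoverlap q$. On the right, since $g_0(q)$ is the set-indexed join of the base elements $a$ satisfying $(\forall x\releps S)(x\sqoverlap a\Rightarrow f(x)\sqoverlap q)$, the splitting-of-join axiom tells me that $b\sqoverlap g_0(q)$ holds exactly when such an $a$ also overlaps $b$. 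Hence the whole claim reduces to the equivalence
\[ f(b)\sqoverlap q\ \Longleftrightarrow\ (\exists a\releps S)\big(b\sqoverlap a\ \land\ (\forall x\releps S)(x\sqoverlap a\Rightarrow f(x)\sqoverlap q)\big). \]

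The right-to-left direction is routine: given such an $a$, instantiating its universal clause at $x=b$ and using $b\sqoverlap a$ produces $f(b)\sqoverlap q$. The converse direction is the only delicate point, and I expect it to be the main obstacle: the naive witness $a=b$ fails, because $b\sqoverlap b$ need not hold. Instead, starting from $f(b)\sqoverlap q$ I would first pass to $b\sqoverlap g(q)$ via $f\fusim g$; then, writing $g(q)=\bigvee\{c\releps S\mid c\leq g(q)\}$ as a join over the base and applying splitting of join, I obtain some $c\releps S$ with $c\leq g(q)$ and $b\sqoverlap c$. This $c$ is exactly the witness required: for any $x\releps S$ with $x\sqoverlap c$, item 1 of Proposition~\ref{properties o-algebra} gives $x\sqoverlap g(q)$, whence $f(x)\sqoverlap q$ by $f\fusim g$ again. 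Thus $c$ satisfies the universal clause while $b\sqoverlap c$, which completes the equivalence and hence the proof that $g=g_0$.
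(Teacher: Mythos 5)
Your proof is correct, and part 1 coincides with the paper's argument; but your part 2 takes a genuinely different route. The paper proves equation~(\ref{eq. def. f^-}) by a purely equational computation on index sets: starting from $g(q)=\bigvee\{a\releps S\mid a\leq g(q)\}$, it rewrites the condition $a\leq g(q)$ as $(\forall x\releps S)(x\sqoverlap a\Rightarrow x\sqoverlap g(q))$ (using the set-based form of density~(\ref{eq. set-based density}) in one direction and item 1 of Proposition~\ref{properties o-algebra} in the other) and then replaces $x\sqoverlap g(q)$ by $f(x)\sqoverlap q$ via $f\fusim g$; the two joins are then joins over literally the same subset of $S$, so no splitting of join and no appeal to item 2 is needed. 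You instead compare the two elements through their overlaps with base elements (item 2), split both joins, and are led to the equivalence
\[ f(b)\sqoverlap q\ \Longleftrightarrow\ (\exists a\releps S)\big(b\sqoverlap a\ \land\ (\forall x\releps S)(x\sqoverlap a\Rightarrow f(x)\sqoverlap q)\big), \]
whose nontrivial direction you settle by extracting a witness $c\leq g(q)$ with $b\sqoverlap c$ --- correctly noting that the naive witness $a=b$ fails since $b\sqoverlap b$ need not hold. What your detour buys is that this displayed equivalence is exactly condition~(\ref{RED}) of Proposition~\ref{prop. char. o-morphism} restricted to base elements, so you have in effect already proved the substance of the implication (1~$\Rightarrow$~3) there (and the converse direction of your equivalence is the core of (3~$\Rightarrow$~2)); the paper's computation is shorter and more economical, but yours makes the link between symmetrizability and the continuity-style characterization explicit earlier.
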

\begin{proof}
(1) For every $x$ in (a base for) $\P$ and every $y$ in $\Q$, we have: $x\sqoverlap h(y)$
iff $f(x)\sqoverlap y$ iff $x\sqoverlap g(y)$. Hence $h=g$ by (2) of prop.~\ref{properties o-algebra}. (2): $g(q)$ $=$ $\bigvee\{a\releps S$ $|$
  $a\leq g(q)\}$ $=$ $\bigvee\{a\releps
S$ $|$
  $(\forall x\releps S)(x\sqoverlap a$ $\Rightarrow$ $x\sqoverlap g(q)\}
  =\bigvee\{a\releps
S$ $|$
  $(\forall x\releps S)(x\sqoverlap a$ $\Rightarrow$ $f(x)\sqoverlap
  q)\}$.
\end{proof}

\begin{definition} We say that a map $f:\P\rightarrow\Q$ between o-algebras is
\emph{symmetrizable} if there exists a (necessarily unique) map
$f^-:\Q\rightarrow\P$ such that $f\fusim f^-$. In that case, we say
that $f^-$ is \emph{the symmetric} of $f$.
\end{definition}

\begin{remark} Since $\sqoverlap$ is a symmetric binary relation, if $f$ is
symmetrizable also $f^-$ is and $(f^-)^-=f$. Note also that, if
$f:\P\rightarrow\Q$ is an o-morphism, then also
$f^-:\Q\rightarrow\P$ is an o-morphism.\footnote{Observe that $f^-$ is in fact the inverse of $f$ when the latter is an
isomorphism.}
\end{remark}

\begin{lemma}\label{prop. symmetrizable preseves joins}
Let $f$ be a symmetrizable map on the o-algebra $\P$ to the
o-algebra $\Q$; then $f$ (and $f^-$) preserves all (set-indexed)
joins.
\end{lemma}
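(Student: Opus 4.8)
The plan is to reduce the claim to the ``splitting of join'' axiom by transporting overlap-tests across the symmetry $f\fusim f^-$. Fix a set-indexed family $(p_i)_{i\releps I}$ in $\P$; the goal is $f\big(\bigvee_{i\releps I} p_i\big) = \bigvee_{i\releps I} f(p_i)$, where the right-hand join exists because $\Q$ is a set-based suplattice and $I$ is a set. First I would invoke item 2 of Proposition~\ref{properties o-algebra} in $\Q$: writing $T$ for a base of $\Q$, it suffices to show that $b\sqoverlap f\big(\bigvee_{i\releps I} p_i\big)$ if and only if $b\sqoverlap \bigvee_{i\releps I} f(p_i)$, for every $b\releps T$.

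Then I would establish this equivalence by a chain of biconditionals, using symmetry of $\sqoverlap$ at each end. By symmetry and the defining property of $f^-$ (namely $f(p)\sqoverlap q \Leftrightarrow p\sqoverlap f^-(q)$), we have $b\sqoverlap f\big(\bigvee_{i\releps I} p_i\big)$ iff $\big(\bigvee_{i\releps I} p_i\big)\sqoverlap f^-(b)$. Next, ``splitting of join'' in $\P$ turns this into $(\exists i\releps I)\,(p_i\sqoverlap f^-(b))$. Applying $f\fusim f^-$ (together with symmetry) once more inside the existential rewrites each disjunct as $f(p_i)\sqoverlap b$, so the statement becomes $(\exists i\releps I)\,(f(p_i)\sqoverlap b)$. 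Finally, ``splitting of join'' in $\Q$ collapses this back to $b\sqoverlap \bigvee_{i\releps I} f(p_i)$, which closes the chain.

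Having matched the two overlap-tests for all $b\releps T$, item 2 of Proposition~\ref{properties o-algebra} yields the desired equality, so $f$ preserves set-indexed joins. For $f^-$, I would simply note that the Remark preceding the statement gives that $f^-$ is itself symmetrizable with $(f^-)^-=f$, so the very same argument (with the roles of $\P$ and $\Q$ interchanged) shows that $f^-$ preserves joins as well.

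I do not expect a serious obstacle here: the whole proof is a single chain of equivalences, and the only point requiring care is bookkeeping --- making sure the symmetry of $\sqoverlap$ is used to align the sides so that the two applications of ``splitting of join'' (one in $\P$, one in $\Q$) and the two applications of $f\fusim f^-$ are correctly paired. The genuinely load-bearing facts are ``splitting of join'', which lets the existential quantifier commute with $f$ through its symmetric partner, and the density-based identity of item 2, which converts the pointwise overlap agreement into an honest equality.
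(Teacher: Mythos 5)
Your proof is correct and follows essentially the same route as the paper's: a chain of biconditionals transporting the overlap test through $f\fusim f^-$, using splitting of join once in $\P$ and once in $\Q$, and then converting the pointwise agreement into an equality (the paper invokes density directly over all of $\Q$, while you invoke item 2 of Proposition~\ref{properties o-algebra} over a base, which is the same fact packaged as an equality criterion). Your treatment of $f^-$ via the Remark that $(f^-)^-=f$ is also exactly the intended justification.
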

\begin{proof} For any $y:\P$, we have:
$y\sqoverlap f(\bigvee_{i\releps I}p_i)$ iff
$f^-(y)\sqoverlap\bigvee_{i\releps I}p_i$ iff $(\exists i\releps
I)(f^-(y)\sqoverlap p_i)$ iff $(\exists i\releps I)(y\sqoverlap
f(p_i))$ iff $y\sqoverlap\bigvee_{i\releps I}f(p_i)$. Hence (by
density) we can conclude that $f(\bigvee_{i\releps
I}p_i)=\bigvee_{i\releps I}f(p_i)$.
\end{proof}

\begin{proposition}\label{prop. char. o-morphism}
Let $f:\P\rightarrow\Q$ be a map between two (set-based) o-algebras;
then the following are equivalent:
\begin{enumerate}
\item $f$ is an o-morphism;
\item $f$ is symmetrizable;
\item $f$ satisfies the following property:
\begin{equation}\label{RED}
f(p)\sqoverlap q\ \Longleftrightarrow\ (\exists a\releps
S)\big(p\sqoverlap a \ \land \ (\forall x\releps S)(x\sqoverlap a\
\Rightarrow\ f(x)\sqoverlap q)\big)
\end{equation} for all $p:\P$ and $q:\Q$ (where $S$ is a base of $\P$).
\end{enumerate}
\end{proposition}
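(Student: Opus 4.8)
The plan is to prove the cycle $(1)\Rightarrow(2)\Rightarrow(1)$, establishing $(1)\Leftrightarrow(2)$, and separately the equivalence $(2)\Leftrightarrow(3)$; together these give the full three-way equivalence.

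The implication $(1)\Rightarrow(2)$ is immediate from the definitions: an o-morphism carries by definition a map $f^-$ satisfying clause~3 of Definition~\ref{def. o-morphism}, namely $f\fusim f^-$, so $f$ is symmetrizable. For the converse $(2)\Rightarrow(1)$ I would use Lemma~\ref{prop. symmetrizable preseves joins}, which tells us that a symmetrizable $f$ and its symmetric $f^-$ both preserve set-indexed joins. As recalled just before Proposition~\ref{prop. properties symmetric}, in a set-based o-algebra a join-preserving map admits a right adjoint; so I would set $f^*(q)=\bigvee\{a\releps S\mid f(a)\leq q\}$ and, for a base $T$ of $\Q$, $f^{-*}(p)=\bigvee\{b\releps T\mid f^-(b)\leq p\}$. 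The adjunctions $f\dashv f^*$ and $f^-\dashv f^{-*}$ then follow from join preservation, while $f\fusim f^-$ is the hypothesis; hence all three clauses of Definition~\ref{def. o-morphism} hold and $f$ is an o-morphism.

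The core of the proposition is $(2)\Leftrightarrow(3)$, and the key observation is that the right-hand side of~(\ref{RED}) is exactly what the splitting-of-join axiom produces from $p\sqoverlap f^-(q)$ once $f^-(q)$ is written in its canonical form~(\ref{eq. def. f^-}). For $(2)\Rightarrow(3)$ I would begin from $f(p)\sqoverlap q\Leftrightarrow p\sqoverlap f^-(q)$, replace $f^-(q)$ by the expression~(\ref{eq. def. f^-}) supplied by Proposition~\ref{prop. properties symmetric}, and apply splitting of join: writing $\varphi(a)$ for $(\forall x\releps S)(x\sqoverlap a\Rightarrow f(x)\sqoverlap q)$, the relation $p\sqoverlap\bigvee\{a\releps S\mid\varphi(a)\}$ becomes $(\exists a\releps S)(p\sqoverlap a\ \land\ \varphi(a))$, which is precisely the right-hand side of~(\ref{RED}). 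For $(3)\Rightarrow(2)$ --- the genuinely constructive direction, since it manufactures the symmetric map out of $f$ and $\sqoverlap$ alone --- I would simply \emph{define} $f^-$ by the formula~(\ref{eq. def. f^-}), use splitting of join to rewrite $p\sqoverlap f^-(q)$ as the right-hand side of~(\ref{RED}), and then invoke the hypothesis~(\ref{RED}) to identify this with $f(p)\sqoverlap q$; this is exactly $f\fusim f^-$, witnessing that $f$ is symmetrizable.

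The step needing the most care is the application of splitting of join to the \emph{subset}-indexed family $\{a\releps S\mid\varphi(a)\}$ rather than to a bare set-indexed family: one must check that the existential quantifier delivered by the axiom correctly carries the side condition $\varphi(a)$, which it does because $\varphi$ is precisely the comprehension predicate cutting out the index subset. Apart from this bookkeeping, every step reduces to invoking an axiom of Definition~\ref{def. o-algebras} or an already-established result, so I do not anticipate any serious obstacle.
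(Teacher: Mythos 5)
Your proposal is correct and follows essentially the same route as the paper: the implication $(2)\Rightarrow(1)$ via Lemma~\ref{prop. symmetrizable preseves joins} and the explicit right adjoints over the bases, and $(3)\Rightarrow(2)$ by taking formula~(\ref{eq. def. f^-}) as the definition of $f^-$ and applying splitting of join, are exactly the paper's arguments. The only difference is bookkeeping: the paper closes the cycle with a direct proof of $(1)\Rightarrow(3)$ (rewriting the condition $(\forall x\releps S)(x\sqoverlap a\Rightarrow x\sqoverlap f^-(q))$ as $a\leq f^-(q)$ via density), whereas you split the statement into $(1)\Leftrightarrow(2)$ and $(2)\Leftrightarrow(3)$, with your $(2)\Rightarrow(3)$ obtained by substituting the canonical form of $f^-(q)$ from Proposition~\ref{prop. properties symmetric} --- a logically equivalent rearrangement of the same ingredients.
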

\begin{proof}
(3 $\Rightarrow$ 2) We show that the function $g(q)$ $=$
$\bigvee\{a\releps S$ $|$ $(\forall x\releps S)(x\sqoverlap a$
$\Rightarrow$ $f(x)\sqoverlap q)\}$ of Proposition \ref{prop.
properties symmetric} is in fact the symmetric of $f$. As
$\sqoverlap$ splits joins, we have $p\sqoverlap g(q)$ if and only if
$p\sqoverlap a$ for some $a$ satisfying $(\forall x\releps
S)(x\sqoverlap a$ $\Rightarrow$
$f(x)\sqoverlap q)$ and this holds if and only if, by 3, $f(p)\sqoverlap q$.\\
(2 $\Rightarrow$ 1) By Proposition \ref{prop. symmetrizable preseves
joins}, both $f$ and $f^-$ preserve joins; hence their right
adjoints
$f^*$ and $f^{-*}$ exist.\\
(1 $\Rightarrow$ 3) Let $f^-$ be the symmetric of $f$. Then
$(\exists a\releps S)(p\sqoverlap a$ $\land$ $(\forall x\releps
S)(x\sqoverlap a$ $\Rightarrow$ $f(x)\sqoverlap q))$ iff $(\exists
a\releps S)(p\sqoverlap a$ $\land$ $(\forall x\releps S)(x\sqoverlap
a$ $\Rightarrow$ $x\sqoverlap f^-(q)))$ iff $(\exists a\releps
S)(p\sqoverlap a$ $\land$ $a\leq f^-(q))$ iff \footnote{Because
$p\sqoverlap q$ $\Longleftrightarrow$ $p\sqoverlap\bigvee\{a\releps
S$ $|$ $a\leq q\}$ $\Longleftrightarrow$ $(\exists a\releps
S)(p\sqoverlap a\ \land\ a\leq q)$.} $p\sqoverlap f^-(q)$ iff
$f(p)\sqoverlap q$.
\end{proof}

Here we want to spend some words about item 3. Firstly, it is surely
of some interest because it characterizes the notion of o-morphism
by an intrinsic property of the map $f$ itself. Moreover, it seems
the right notion of morphism in the non-complete case, as we shall
see in the last sections. Furthermore, we think it is worth
mentioning that (\ref{RED}) is a form of continuity. This fact is
better seen in the context of formal topology (see \cite{ci2}).
However, we can here give a suggestion: following equations
(\ref{eq. induced operators}) we write $(\sqoverlap q)$ for $\{\, x\releps S\, \mid\,  x\sqoverlap q\}$; then condition (\ref{RED}) can be rewritten
as $f^{-1}(\sqoverlap q)$ $=$ $\bigcup\{(\sqoverlap a)$ $|$
$(\sqoverlap a)\sub f^{-1}(\sqoverlap q)\}$. Thus, if the families
$\{\sqoverlap p\}_{p:\P}$ and $\{\sqoverlap q\}_{q:\Q}$ are taken as
sub-bases for two topologies on $\P$ and $\Q$, respectively, then
(\ref{RED}) is a notion of continuity for $f$ (in fact, this is
stronger than usual continuity because the $(\sqoverlap p)$'s do not
form a base, in general). By the way, note that reading $\sqoverlap$
as a unary operator allows to rewrite $f\fusim f^-$ as
$f^{-1}\circ\sqoverlap$ $=$ $\sqoverlap\circ f^-$.

\begin{proposition}\label{prop. CLASS: o-morphism = join-preserving}
Classically and impredicatively, o-morphisms are exactly the maps
preserving all joins.
\end{proposition}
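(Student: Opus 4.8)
The plan is to prove both inclusions by exploiting the characterization of o-morphisms as exactly the symmetrizable maps (the equivalence $1\Leftrightarrow 2$ of Proposition~\ref{prop. char. o-morphism}), together with the classical reading of overlap as $p\sqoverlap q\Leftrightarrow p\wedge q\neq 0$ (Lemma~\ref{lemma 1}) and the fact that, classically and impredicatively, every o-algebra is a complete Boolean algebra (Proposition~\ref{prop. o-algebras cBa's}), so that the carrier is a set and a genuine complement $-$ is available.

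The forward inclusion is essentially already done. If $f$ is an o-morphism then it is symmetrizable, and by Lemma~\ref{prop. symmetrizable preseves joins} a symmetrizable map preserves all set-indexed joins; since impredicatively the carrier is a set, every family is set-indexed, and so $f$ preserves all joins. Hence o-morphisms are join-preserving.

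For the converse --- the substantive direction --- I would start from a map $f\colon\P\rightarrow\Q$ preserving all joins and show it is symmetrizable, which by Proposition~\ref{prop. char. o-morphism} suffices. Since $f$ preserves all joins and the lattices are complete, $f$ has a right adjoint $f^*$, given as usual by $f^*(q)=\bigvee\{p:f(p)\leq q\}$. The key idea is then to define the candidate symmetric by $f^-(q):=-f^*(-q)$, using Boolean complementation, and to verify $f\fusim f^-$. Reading $\sqoverlap$ as $\wedge\neq 0$ and using that in a Boolean algebra $r\leq s$ iff $r\wedge -s=0$, one computes: $f(p)\sqoverlap q$ fails iff $f(p)\wedge q=0$ iff $f(p)\leq -q$ iff $p\leq f^*(-q)$ iff $p\wedge -f^*(-q)=0$ iff $p\wedge f^-(q)=0$, i.e. iff $p\sqoverlap f^-(q)$ fails; negating both sides gives $f(p)\sqoverlap q\Leftrightarrow p\sqoverlap f^-(q)$, as desired.

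The main obstacle --- and the only place where I expect classical logic to be genuinely needed --- is precisely this chain of equivalences: passing between ``$\wedge\neq 0$'' and ``$\not\leq$'' and collapsing the resulting double negation rests on excluded middle and on having a true complement rather than a mere pseudocomplement, exactly the ingredients isolated in Lemma~\ref{lemma 2} and Proposition~\ref{prop. o-algebras cBa's}. Everything else (existence of $f^*$ from join-preservation, uniqueness of the symmetric) is formal and already recorded in Propositions~\ref{prop. properties symmetric} and~\ref{prop. char. o-morphism}.
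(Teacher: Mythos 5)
Your proof is correct and takes essentially the same approach as the paper's: the forward direction by reducing to Lemma \ref{prop. symmetrizable preseves joins}, and the converse by obtaining the right adjoint $f^*$ (available impredicatively) and checking that $f^-(q)=-f^*(-q)$ is the symmetric of $f$ via the classical Boolean equivalence between $\sqoverlap$ and $\wedge\neq 0$. The only cosmetic difference is that you run the chain of equivalences in negated form ($\wedge=0$, $\leq$) and then cancel the negations by excluded middle, whereas the paper writes the same chain directly with $\neq 0$ and $\nleq$.
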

\begin{proof}
In a classical setting, an o-algebra is exactly a cBa (Proposition
\ref{prop. o-algebras cBa's}). As we already know from Proposition
\ref{prop. symmetrizable preseves joins}, every o-morphism is
join-preserving. Viceversa, if $f:\P\rightarrow\Q$ preserves all
joins, then (by the powerset axiom) it admits a right adjoint $f^*$.
We claim that $f^-$ exists and it is $f^-(q)=-f^*(-q)$. Indeed, for
$p$ in $\P$ we have: $p\sqoverlap -f^*(-q)$ iff $p\wedge-f^*(-q)\neq
0$ iff $p\nleq f^*(-q)$ iff $f(p)\nleq -q$ iff $f(p)\wedge q\neq 0$
iff $f(p)\sqoverlap q$.
\end{proof}

\begin{definition}
Let $\mathbf{cBa}$ be the category of complete Boolean algebras with maps
preserving arbitrary joins and finite meets (and hence complements).

We write $\mathbf{cBa}_{\bigvee}$ for the category (of which $\mathbf{cBa}$ is a subcategory) of complete Boolean algebras and
\emph{join-preserving maps} (maps which preserve arbitrary joins).
\end{definition}

\begin{corollary} Classically, the categories $\mathbf{OA}$ and
$\mathbf{cBa}_{\bigvee}$ are equivalent.
\end{corollary}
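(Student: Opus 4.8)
The plan is to assemble the object-level and morphism-level correspondences already established into a pair of mutually inverse functors, so that the equivalence drops out (in fact as an isomorphism of categories). I would define $F\colon\mathbf{OA}\rightarrow\mathbf{cBa}_{\bigvee}$ by sending each o-algebra $(\P,\leq,\sqoverlap)$ to the complete Boolean algebra $(\P,\leq)$ obtained from Proposition~\ref{prop. o-algebras cBa's} (recalling that the powerset axiom lets us regard the carrier as a set), and sending each o-morphism to the same underlying function, which is join-preserving by Proposition~\ref{prop. CLASS: o-morphism = join-preserving}. In the other direction I would define $G\colon\mathbf{cBa}_{\bigvee}\rightarrow\mathbf{OA}$ by equipping a cBa with the overlap relation $p\sqoverlap q\Leftrightarrow p\wedge q\neq 0$ (again Proposition~\ref{prop. o-algebras cBa's}), and again leaving morphisms untouched, since by Proposition~\ref{prop. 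CLASS: o-morphism = join-preserving} a join-preserving map between the associated o-algebras is precisely an o-morphism.

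Because both functors act as the identity on underlying functions, functoriality (preservation of identities and of composition) is immediate and requires no computation. The only real content is checking that the two round trips are identities. For $G\circ F$, starting from an o-algebra $(\P,\leq,\sqoverlap)$ I recover $(\P,\leq)$ with the overlap $p\sqoverlap' q\Leftrightarrow p\wedge q\neq 0$; by Lemma~\ref{lemma 1} this $\sqoverlap'$ coincides with the original $\sqoverlap$, so $G\circ F$ is the identity on objects (and trivially on morphisms). For $F\circ G$, the order, meets and complements of a cBa are untouched by passing through its overlap relation and back, so $F\circ G$ is likewise the identity. Hence $F$ and $G$ are mutually inverse, yielding an isomorphism of categories and \emph{a fortiori} the claimed equivalence.

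The one point deserving care — and the place where the statement is most fragile — is the degenerate algebra in which $0=1$. Proposition~\ref{prop. o-algebras cBa's} is phrased under the hypotheses $0\neq 1$ and $1\sqoverlap 1$, and these two conditions match exactly: in any o-algebra $1\sqoverlap 1$ is equivalent to $1\neq 0$ (by item~6 of Proposition~\ref{properties o-algebra}, $\neg(1\sqoverlap 1)$ forces $1=1\wedge 1=0$). Thus the non-trivial o-algebras correspond bijectively to the non-trivial complete Boolean algebras, and one only has to decide consistently whether to admit the one-element (trivial) object on both sides; since the trivial o-algebra (with empty overlap) and the trivial cBa correspond to each other under $F$ and $G$ as well, either convention yields the equivalence. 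I expect this edge case, rather than any genuine categorical argument, to be the main thing to get right.
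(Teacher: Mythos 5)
Your proposal is correct and takes essentially the same approach as the paper: the paper's proof is a one-line appeal to Propositions~\ref{prop. o-algebras cBa's} and~\ref{prop. CLASS: o-morphism = join-preserving}, and your explicit mutually inverse functors (with Lemma~\ref{lemma 1} handling the round trip on objects) simply spell out what that citation leaves implicit. Your attention to the degenerate $0=1$ case is a reasonable refinement of a point the paper glosses over, but it does not change the substance of the argument.
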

\begin{proof}
By Propositions \ref{prop. o-algebras cBa's} and \ref{prop. CLASS:
o-morphism = join-preserving}.
\end{proof}

Symmetrically, it is not difficult to select a subcategory of
$\mathbf{OA}$ which is isomorphic to the whole of $\mathbf{cBa}$.

\begin{definition}
Let $\mathbf{OA}^{\wedge}$ be the subcategory of $\mathbf{OA}$ with
the same objects as $\mathbf{OA}$ and whose morphisms are the
o-morphisms preserving finite meets.
\end{definition}

\begin{corollary}
The category $\mathbf{cBa}$ of complete Boolean algebras is
classically equivalent to the category $\mathbf{OA}^{\wedge}$.
\end{corollary}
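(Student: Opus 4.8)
The plan is to show that the equivalence $\mathbf{OA}\simeq\mathbf{cBa}_{\bigvee}$ of the previous corollary restricts to the wide subcategories cut out by the meet-preservation requirement. First I would recall exactly how that equivalence is built. By Proposition~\ref{prop. o-algebras cBa's}, classically an o-algebra $\P$ and the complete Boolean algebra associated to it share the same carrier and the same order, the overlap being recovered as $p\sqoverlap q\Leftrightarrow p\wedge q\neq 0$; so the object correspondence is a bijection that leaves the underlying poset — and hence its binary meets and its top element — untouched. By Proposition~\ref{prop. CLASS: o-morphism = join-preserving}, a function $f$ is an o-morphism if and only if it preserves all joins, and in either direction the underlying set-map is the very same $f$. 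Thus the functor $F\colon\mathbf{OA}\to\mathbf{cBa}_{\bigvee}$ realizing the equivalence is the identity on carriers, orders, and underlying functions: only the chosen primitive structure ($\sqoverlap$ versus $0$ and complementation) is repackaged.

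Next I would observe that $\mathbf{OA}^{\wedge}$ and $\mathbf{cBa}$ are both wide subcategories — same objects, fewer arrows — of $\mathbf{OA}$ and $\mathbf{cBa}_{\bigvee}$ respectively, and that in each case the arrows are selected by one and the same condition on the underlying function $f$, namely that it preserve finite meets (the binary $\wedge$ together with the empty meet, i.e. the top). The key point to nail down is that the meet operation occurring in the definition of $\mathbf{OA}^{\wedge}$ really coincides with the Boolean meet occurring in the definition of $\mathbf{cBa}$. This is guaranteed by the object part of $F$: a set-based o-algebra already carries binary meets (as noted in Section~2), and classically its order is precisely that of the associated complete Boolean algebra, so the two meets — and the two top elements — agree. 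Consequently ``$f$ preserves finite meets'' is literally the same requirement on the two sides of $F$.

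From here the conclusion is formal. Since $F$ leaves underlying functions unchanged, it sends an o-morphism preserving finite meets to a join-preserving map preserving finite meets, and its quasi-inverse does the reverse; hence $F$ restricts to a functor $\mathbf{OA}^{\wedge}\to\mathbf{cBa}$ which is bijective on objects and is a bijection on each hom-set (being so already on all of $\mathbf{OA}$). Therefore this restriction is again an equivalence of categories, which is the assertion.

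I do not expect a genuine obstacle here; the only thing that needs care is the explicit verification that the two notions of finite meet coincide, that is, that passing from the overlap presentation to the Boolean presentation alters neither $\wedge$ nor the top element. Once that identification is recorded, the restriction of the equivalence to the meet-preserving subcategories is automatic.
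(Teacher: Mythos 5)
Your proof is correct and matches the paper's (implicit) argument: the paper states this corollary without proof, as an immediate restriction of the equivalence $\mathbf{OA}\simeq\mathbf{cBa}_{\bigvee}$ obtained from Propositions~\ref{prop. o-algebras cBa's} and \ref{prop. CLASS: o-morphism = join-preserving}, which is exactly what you carry out. Your explicit check that the object correspondence preserves the underlying order, hence binary meets and the top, so that ``preserves finite meets'' means the same thing on both sides, is precisely the detail the paper leaves to the reader.
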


\subsection{Atomic o-algebras as discrete formal topologies}

In a poset with zero, every minimal non-zero element is usually
called an ``atom''. Here, we see how to define the notion of atom
in an overlap algebra. This will allow us to characterize power-collections as atomic (set-based
o-algebras). Then, after noting that the notion of atom can be also given
in the language of formal topology, we define the notion of  {\it discrete formal topology}
and compare it with the categorical characterization of discrete locales by Joyal and Tierney in \cite{joyal-tierney}.

\begin{definition}[atom in an o-algebra]
\label{def. atom} Let $\P$ be an overlap algebra. We say that
$m:\mathcal{P}$ is an \emph{atom} if $m\sqoverlap m$ and for every
$p:\mathcal{P}$, if $m\sqoverlap p$ then $m\leq p$.
\end{definition}

There are several useful characterization of this notion; among
them, we list the following.

\begin{lemma}\label{lemma char. atoms}
In any o-algebra $\P$, the following are equivalent:
\begin{enumerate}
\item $m$ is an atom;
\item $m\sqoverlap m$ and, for every $p:\P$, if $p\sqoverlap p$ and
$p\leq m$, then $p=m$;
\item for every
$p:\P$, $m\sqoverlap p$ if and only if $m\leq p$;
\item $m\sqoverlap m$ and, for every $p,q:\P$, if $m\sqoverlap
p$ and $m\sqoverlap q$, then $m\sqoverlap p\wedge q$.
\end{enumerate}
\end{lemma}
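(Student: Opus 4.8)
The plan is to prove the four conditions equivalent by establishing the cycle $1\Rightarrow 2\Rightarrow 3\Rightarrow 4\Rightarrow 1$. Three of the four implications are essentially bookkeeping with the basic facts already collected in Proposition~\ref{properties o-algebra}, whereas the last one, $4\Rightarrow 1$, is where the density axiom must be invoked; this is the only genuinely substantial step, and I expect it to be the main obstacle.

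For $1\Rightarrow 2$ I would argue as follows: if $m$ is an atom then $m\sqoverlap m$ holds by Definition~\ref{def. atom}, and whenever $p\sqoverlap p$ with $p\leq m$, monotonicity (item~1 of Proposition~\ref{properties o-algebra}) gives $p\sqoverlap m$, hence $m\sqoverlap p$ by symmetry, so $m\leq p$ since $m$ is an atom, and antisymmetry yields $p=m$. For $2\Rightarrow 3$, the direction $m\leq p\Rightarrow m\sqoverlap p$ is again monotonicity applied to $m\sqoverlap m$; for the converse I would use item~4 of Proposition~\ref{properties o-algebra} to rewrite $m\sqoverlap p$ as $(m\wedge p)\sqoverlap(m\wedge p)$, and then, since $m\wedge p\leq m$, hypothesis~2 forces $m\wedge p=m$, i.e.\ $m\leq p$. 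For $3\Rightarrow 4$, the self-overlap $m\sqoverlap m$ comes from taking $p=m$ in condition~3, and if $m\sqoverlap p$ and $m\sqoverlap q$ then $m\leq p$ and $m\leq q$, hence $m\leq p\wedge q$, hence $m\sqoverlap(p\wedge q)$ by condition~3 once more.

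The hard part is $4\Rightarrow 1$, where the idea is to reduce an inequality to an overlap statement through the density axiom of Definition~\ref{def. o-algebras}. Assuming condition~4, I fix $p$ with $m\sqoverlap p$ and aim to prove $m\leq p$; by density it is enough to check that $r\sqoverlap m$ implies $r\sqoverlap p$ for every $r$. So I would take $r$ with $r\sqoverlap m$, pass to $m\sqoverlap r$ by symmetry, and then feed the two facts $m\sqoverlap r$ and $m\sqoverlap p$ into the meet-closure hypothesis of condition~4 to obtain $m\sqoverlap(r\wedge p)$. Item~3 of Proposition~\ref{properties o-algebra} rewrites this as $(m\wedge r)\sqoverlap p$, and since $m\wedge r\leq r$ a single use of monotonicity (with symmetry) upgrades it to $r\sqoverlap p$, as required; the remaining clause $m\sqoverlap m$ is already part of condition~4, so the atom property~1 follows. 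The one subtlety to keep in mind is that condition~4 provides closure of overlap only \emph{relative to the fixed element} $m$, so the argument must route both hypotheses through $m$ first and only afterwards transfer the conclusion to the pair $r,p$; items~1 and~3 of Proposition~\ref{properties o-algebra} are precisely what make this transfer go through.
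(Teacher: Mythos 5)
Your proof is correct, and every step checks out against the tools you invoke: $1\Rightarrow 2$ via monotonicity (item~1 of Proposition~\ref{properties o-algebra}), symmetry and antisymmetry; $2\Rightarrow 3$ via item~4 applied to $m\wedge p$; $3\Rightarrow 4$ by two direct applications of condition~3; and the crucial $4\Rightarrow 1$ by density, where you correctly handle the subtlety that the meet-closure in condition~4 is anchored at $m$: from $r\sqoverlap m$ and $m\sqoverlap p$ you get $m\sqoverlap(r\wedge p)$, convert it to $(m\wedge r)\sqoverlap p$ by item~3, and then transfer to $r\sqoverlap p$ by monotonicity and symmetry, which is exactly what density needs to conclude $m\leq p$. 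There is nothing to compare on the paper's side, because the paper does not prove this lemma at all: its ``proof'' is only a pointer to \cite{bp} and \cite{ci2}. So your cyclic argument $1\Rightarrow 2\Rightarrow 3\Rightarrow 4\Rightarrow 1$ is a legitimate self-contained replacement for that citation. The only implicit assumption worth flagging is the existence of the binary meets $m\wedge p$ and $r\wedge p$ you use throughout; this is harmless here, since the paper stipulates that all o-algebras are set-based and observes (Section~2) that set-based suplattices always have binary meets.
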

\begin{proof} See \cite{bp} and \cite{ci2}. \end{proof}

\begin{definition}
We say that an overlap algebra is \emph{atomic} if its atoms form a base, i.e.
the atoms form a set and each element of the algebra
is join-generated from a subset of atoms.
\end{definition}
 Clearly $\P(X)$ is
atomic; and this is, essentially, the only example (see \cite{bp}).

\begin{proposition}\label{prop. atomic o-alg. = powerset}
\label{prop. sadocco-sambin} An o-algebra $\mathcal{Q}$ is atomic
if and only if it is isomorphic to $\mathcal{P}(S)$, for some set
$S$.
\end{proposition}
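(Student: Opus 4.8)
The plan is to prove both implications, with the forward direction carrying essentially all the content. For the easy direction ($\Leftarrow$), I would first recall that $\P(S)$ is atomic: its singletons are atoms (from $\{s\}\overlap A$ one gets $s\releps A$, whence $\{s\}\sub A$) and they already form a base. Since an o-algebra isomorphism $f$ satisfies $f^-=f^{-1}$ (as noted after Definition \ref{def. o-morphism}), one has $f(p)\sqoverlap f(q)\Leftrightarrow p\sqoverlap q$; together with the fact that isomorphisms preserve and reflect $\leq$, this shows $f$ carries atoms to atoms and bases to bases. Hence any $\Q$ isomorphic to some $\P(S)$ is again atomic.

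For the converse ($\Rightarrow$), suppose $\Q$ is atomic and let $S$ be its set of atoms, which by hypothesis is a base. I would define two maps
\[
\phi:\Q\rightarrow\P(S),\quad \phi(q)=\{m\releps S\mid m\leq q\},
\qquad
\psi:\P(S)\rightarrow\Q,\quad \psi(A)=\bigvee A,
\]
and show they are mutually inverse. The identity $\psi(\phi(q))=q$ is immediate because $S$ is a base. The identity $\phi(\psi(A))=A$ is the crux: the inclusion $A\sub\phi(\psi(A))$ is trivial, while for the reverse I would take an atom $m\leq\bigvee A$, use $m\sqoverlap m$ together with item 1 of Proposition \ref{properties o-algebra} to get $m\sqoverlap\bigvee A$, split the join to find $a\releps A$ with $m\sqoverlap a$, and then invoke item 3 of Lemma \ref{lemma char. atoms} for the two atoms $m$ and $a$ to conclude $m\leq a$ and (by symmetry) $a\leq m$, i.e.\ $m=a\releps A$.

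It then remains to check that $\phi$ is an isomorphism in $\mathbf{OA}$, and I would do this through Proposition \ref{prop. char. o-morphism} by exhibiting $\psi$ as the symmetric of $\phi$. Concretely, I would verify $\phi(p)\overlap A\Leftrightarrow p\sqoverlap\psi(A)$: both sides reduce, via splitting of join and item 3 of Lemma \ref{lemma char. atoms}, to the existence of an atom $m\releps A$ with $m\leq p$. Thus $\phi\fusim\psi$, so $\phi$ is symmetrizable and hence an o-morphism; by symmetry of $\fusim$ so is its inverse $\psi$, making $\phi$ an o-algebra isomorphism. Along the way one also sees that $\phi$ preserves and reflects order (using the set-based density axiom (\ref{eq. set-based density}) and, for atoms, the equivalence $a\sqoverlap p\Leftrightarrow a\leq p$) and overlap (using item 4 of Proposition \ref{properties o-algebra}, whose witness is exactly an atom below $p\wedge q$), which is what makes the matching above work.

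I expect the main obstacle to be the second inverse identity $\phi(\psi(A))=A$, since it is precisely here that one must use that the base consists of \emph{atoms}: the fact that two overlapping atoms coincide is what prevents $\bigvee A$ from acquiring spurious atoms outside $A$. A secondary, more bureaucratic point is confirming that a bijection preserving $\leq$ and $\sqoverlap$ in both directions really is an isomorphism of the categorical structure of $\mathbf{OA}$; routing this through symmetrizability and Proposition \ref{prop. char. o-morphism} is the cleanest way to avoid checking the four adjoint maps $f^-,f^*,f^{-*}$ by hand.
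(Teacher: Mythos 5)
Your proposal is correct and takes essentially the same route as the paper: the paper's proof is just the one-line sketch that an atomic o-algebra $\Q$ is isomorphic to $\P(A)$ for $A$ its set of atoms, and your maps $\phi$ and $\psi$, the mutual-inverse computation, and the verification $\phi\fusim\psi$ via Proposition \ref{prop. char. o-morphism} supply exactly the details that sketch leaves implicit. The converse direction, which the paper does not even spell out, is handled correctly as well.
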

\begin{proof} One shows that an atomic o-algebra $\Q$
is isomorphic to ${\cal P}(A)$ where $A$ is the set of atoms of $\Q$.
  \end{proof}

Note that the definition of atom given above makes sense also for the more general notion of a formal topology. In particular, item 2 above suggests the following:

\begin{definition}[atom in a formal topology]
\label{atomft}
Let $\P$ be a formal topology with base $S$ and positivity predicate $\Pos$. We say that an element $a\releps S$ is an \emph{atom}, written $a \releps At(\P)$ if
\begin{enumerate}
\item $\Pos(a)$ holds and
\item for every other $b\releps S$, if $\Pos(b)$ and $b\leq a$, then $b=a$.
\end{enumerate}
\end{definition}

This definition of atom is predicative (no quantification over collections is required), but not restrictive. In fact, it is easy to see that:
\begin{itemize}
\item if $p:\P$ satisfies $1.$ and $2.$ above, then $p\releps S$ (in the usual notation of formal topology this is trivial: any subset $U$ of $S$ that is an atom is a singleton, because of $2.$ and because, being $U$ positive, it is certainly inhabited);
\item if $a$ is an atom, then $2.$ is satisfied for all $b:\P$ (even not belonging to $S$).
\end{itemize}

This definition captures the usual intuition on atoms. For instance, it is easy to check that: \emph{if $a\releps At(\P)$ and $a\leq \bigvee_{u\releps U}u$, then $a\leq u$ for some $u\releps U$}. In fact, from $a\leq\bigvee_{u\releps U}u$ on gets $a\leq a\wedge\bigvee_{u\releps U}u =\bigvee_{u\releps U}(a\wedge u)$. From $\Pos(a)$ one thus obtains $\Pos(a\wedge u)$ for some $u\releps U$. But $a\wedge u\leq a$ and so $a\wedge u=a$, that is, $a\leq u$.

Since the collection of all atoms as defined in definition~\ref{atomft}
form a subset $At(\P)$ of $S$, and hence a set,   we can give a predicative version of the categorical characterization of discrete locales in \cite{joyal-tierney} in the context of formal topology:

\begin{definition}[discrete formal topology]
A formal topology is \emph{discrete} if every element is a join of atoms.
\end{definition}
And, of course,  we can prove:

\begin{corollary}\label{eqatom} Let $\P$ be a formal topology. The following are equivalent:
\begin{enumerate}
\item there exists a binary relation $\sqoverlap$ on $\P$ such that $(\P,\leq,\sqoverlap)$ is an atomic overlap algebra;
\item there exists a set $S$ such that $(\P,\leq)$ is order-isomorphic to $(\P(S),\sub)$;
\item $\P$ is a discrete formal topology.
\end{enumerate}
\end{corollary}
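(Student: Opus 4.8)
The plan is to prove Corollary~\ref{eqatom} by establishing a cycle of implications $(1)\Rightarrow(2)\Rightarrow(3)\Rightarrow(1)$, leaning heavily on results already proven in the excerpt. The equivalence $(1)\Leftrightarrow(2)$ is essentially free: Proposition~\ref{prop. atomic o-alg. = powerset} tells us that an o-algebra is atomic if and only if it is isomorphic to $\P(S)$ for some set $S$, and since an isomorphism of o-algebras is in particular an order-isomorphism of the underlying suplattices, $(1)\Rightarrow(2)$ is immediate. For $(2)\Rightarrow(1)$, I would note that $(\P(S),\sub,\overlap)$ is an atomic overlap algebra (its singletons are precisely the atoms and they form a base), so transporting the overlap relation $\overlap$ along the given order-isomorphism yields a relation $\sqoverlap$ making $(\P,\leq,\sqoverlap)$ an atomic o-algebra. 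Here I must check that the transported relation still satisfies the four axioms of Definition~\ref{def. o-algebras}; this is routine since order-isomorphisms preserve joins and meets, and the overlap axioms are phrased purely in terms of $\leq$, $\wedge$, $\bigvee$ and $\sqoverlap$.

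The real content lies in the equivalence between the overlap-theoretic notion of atomicity and the formal-topology notion of discreteness, i.e. $(1)\Leftrightarrow(3)$. For $(3)\Rightarrow(1)$, suppose $\P$ is a discrete formal topology. By Proposition~\ref{prop o-lagebras vs overt locales}'s converse discussion in the text, a formal topology becomes an o-algebra precisely when its positivity predicate satisfies the density condition; so I would define $a\sqoverlap b \stackrel{def}{\Leftrightarrow} \Pos(a\wedge b)$ and verify density using discreteness. Concretely, if $(\forall a\releps S)\big(\Pos(a\wedge p)\Rightarrow\Pos(a\wedge q)\big)$, then writing $p$ as a join of atoms $p=\bigvee_{m\releps M}m$ with each $m$ an atom, each such $m$ satisfies $\Pos(m)=\Pos(m\wedge m)\Rightarrow\Pos(m\wedge\,\cdot\,)$; using that atoms satisfy item~2 of Lemma~\ref{lemma char. atoms} (so $m\sqoverlap p'\Leftrightarrow m\leq p'$) one deduces $m\leq q$ for each generator $m$ of $p$, whence $p\leq q$. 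Thus $\sqoverlap$ satisfies density and $(\P,\leq,\sqoverlap)$ is an o-algebra; it is atomic because the atoms of the formal topology (Definition~\ref{atomft}) coincide with the atoms in the o-algebra sense (Definition~\ref{def. atom}) and join-generate $\P$ by discreteness.

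For the converse $(1)\Rightarrow(3)$, given an atomic o-algebra structure on $\P$, the atoms form a base, so every element is by definition a join of atoms; it only remains to observe that the o-algebra atoms are exactly the formal-topology atoms. This is the matching-of-definitions step: an element $m$ is an atom in the o-algebra sense iff $m\sqoverlap m$ and $m\sqoverlap p\Rightarrow m\leq p$, which by Lemma~\ref{lemma char. atoms}(2) is equivalent to $\Pos(m)$ together with the minimality condition of Definition~\ref{atomft}, once we recall from Proposition~\ref{prop o-lagebras vs overt locales} that the induced positivity predicate is $\Pos(p)\Leftrightarrow p\sqoverlap p$ and from the bulleted remarks after Definition~\ref{atomft} that any such minimal positive element already lies in the base $S$. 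The main obstacle I anticipate is precisely this bookkeeping around atoms and bases: one must be careful that the notion of ``atom'' is stable under the different but equivalent characterizations (o-algebra atoms versus formal-topology atoms in $S$ versus singletons of $\P(S)$), and that the overlap relation reconstructed from $\Pos$ is forced to be the \emph{unique} one guaranteed by the uniqueness argument given in the text after Proposition~\ref{properties o-algebra}. Everything else reduces to the cited propositions.
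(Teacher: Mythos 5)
Your route is the same as the paper's: the equivalence $(1)\Leftrightarrow(2)$ is delegated to Proposition~\ref{prop. atomic o-alg. = powerset}, the implication $(1)\Rightarrow(3)$ is the matching of o-algebra atoms (Definition~\ref{def. atom}, Lemma~\ref{lemma char. atoms}) with formal-topology atoms (Definition~\ref{atomft}), and the only real content is $(3)\Rightarrow(1)$, where --- exactly as the paper's one-line proof indicates --- one defines $x\sqoverlap y$ as $\Pos(x\wedge y)$ and verifies density from discreteness. Your write-up fills in the details the paper leaves implicit, and the logical coverage is complete (you prove $(1)\Leftrightarrow(2)$ and $(1)\Leftrightarrow(3)$, which is fine, even though it does not match your announced cycle $(1)\Rightarrow(2)\Rightarrow(3)\Rightarrow(1)$).

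One step needs repair. In the density verification for $(3)\Rightarrow(1)$, to pass from $\Pos(m\wedge q)$ to $m\leq q$ you invoke Lemma~\ref{lemma char. atoms} (you cite item~2, but the property you actually quote, $m\sqoverlap p'\Leftrightarrow m\leq p'$, is item~3). That lemma is stated for o-algebras, i.e.\ it presupposes that $\sqoverlap$ already satisfies density --- which is precisely what you are in the middle of proving, so the citation is circular. The step itself is sound, but it must be justified from the formal-topology side: since $m\wedge q\leq m$ and $\Pos(m\wedge q)$, and since (by the bulleted remark following Definition~\ref{atomft}) the minimality clause for a formal-topology atom $m$ holds against arbitrary elements of $\P$, not only base elements, one gets $m\wedge q=m$, i.e.\ $m\leq q$. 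With that substitution the density argument goes through; note that in the converse direction $(1)\Rightarrow(3)$ your appeal to Lemma~\ref{lemma char. atoms} is legitimate, because there the o-algebra structure is given as a hypothesis.
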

\begin{proof}
Just note that to pass from (3) to (1)
one defines an
overlap relation by: $x\sqoverlap y$ iff
 $\Pos(x\wedge
y)$.
\end{proof}

Now, we are going to show how the abstract characterization of discrete
locales in section 5 page 40 of \cite{joyal-tierney} is equivalent
to our notion of discrete formal topology.
Given that the  mentioned characterization of discrete locales  makes reference
to the diagonal map  $\Delta_\P=\langle id_\P,id_\P\rangle$ : $\P$
$\rightarrow$ $\P\times \P$ in the category of locales and given that we do not know how to build predicative
binary products in the whole category of formal topologies but we only know it
 in the full sub-category of inductively generated
 formal topologies in the sense of \cite{cssv}  (see \cite{mv04}), here  we restrict our attention
to inductively generated formal topologies. We just recall that if $S$ is a base for $\P$, then $S\times S$ is a base for the product formal topology $\P\times \P$ and  the corresponding  positivity predicate is $\Pos(p)\, \equiv\, \exists_{a\in S, b\in S}\ (\, (a,b)\leq p \ \& \ (\, \Pos(a)\ \& \ \Pos(b)\, )\,  )$ for $p$ in $\P\times \P$.
In this context we define the notion of
{\em open} formal topology map as follows.
First of all recall that a formal topology map
 $f:\P\rightarrow \Q$ is just a  frame map $f^*:\Q\rightarrow \P$,
 namely a function preserving finite meets and arbitrary joins. A formal topology map
 $f:\P\rightarrow \Q$ is {\em open} if
 the corresponding frame map $f^*:\Q\rightarrow \P$, seen as a functor from
the poset category $\Q$ to $\P$ (i.e. a monotone map),
has a left adjoint  $\exists_f$
satisfying $\exists_f(x\wedge f^*(y))$ = $\exists_f(x)\wedge y$
 (Frobenius reciprocity condition) for all $x$ and
$y$.

 In the next, we will make use of openess of binary product projections in the following form: given $\P$ inductively generated formal topology with base $S$

for $a,b\releps S$ with $a,b$ positive and $W$ subset of $S\times S$

\begin{equation}\label{openess}
(a,b)\leq  \bigvee W  \Rightarrow \left\{
\begin{array}{l} 
 a\, \leq \, \bigvee\{
x\releps S \mid  \exists y\releps S\, (x,y) \releps W\} \\
b\, \leq \, \bigvee\{y\releps S \mid  \exists x\releps S \, (x,y) \releps W\}
\end{array}\right.\end{equation}
(which can be proved by induction, see \cite{mv04}).

Moreover, observe that if $a\releps At(\P)$, then also $(a,a)\releps At(\P\times\P)$: if $(x,y)$ is positive and $(x,y)\leq(a,a)$, then by condition (\ref{openess}) $x\leq a$ and $y\leq a$, and hence $x=y=a$, being $a$ an atom. 

\begin{proposition}
Assuming that $\P$ is an inductively generated formal topology with base $S$,
the following are equivalent:
\begin{enumerate}
\item $\P$ is discrete;
\item the diagonal map $\Delta_\P$ : $\P$
$\rightarrow$ $\P\times \P$
 in the category of formal topologies is open.
\end{enumerate}
\end{proposition}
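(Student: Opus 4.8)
The plan is to work with the explicit frame map of the diagonal and to trade openness of $\Delta_\P$ for the existence of enough atoms. First I record that on the base $S\times S$ the frame map satisfies $\Delta_\P^*(a,b)=a\wedge b$: indeed $\pi_i\circ\Delta_\P=id_\P$ gives $\Delta_\P^*\circ\pi_i^*=id_\P$, and since $\Delta_\P^*$ preserves finite meets and $(a,b)=\pi_1^*(a)\wedge\pi_2^*(b)$, we get $\Delta_\P^*(a,b)=\Delta_\P^*\pi_1^*(a)\wedge\Delta_\P^*\pi_2^*(b)=a\wedge b$. Consequently $\Delta_\P^*\pi_1^*(z)=z=\Delta_\P^*\pi_2^*(z)$ for every $z$, a fact I shall use repeatedly. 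For the direction $(1)\Rightarrow(2)$, assume $\P$ discrete and define $\exists_\Delta(p):=\bigvee\{(m,m)\mid m\releps At(\P),\ m\leq p\}$, a legitimate set-indexed join because $At(\P)\sub S$ is a set. I would first verify the adjunction $\exists_\Delta\dashv\Delta_\P^*$ by reducing everything to atoms via discreteness: for an atom $m$ one has $(m,m)\leq q$ iff $m\leq\Delta_\P^*(q)$, where the forward implication applies $\Delta_\P^*$ and the backward one uses that $m$ is an atom together with the splitting property established just after Definition \ref{atomft} (if $m\leq\bigvee_u u$ then $m\leq u$ for some $u$). I would then prove the Frobenius identity $\exists_\Delta(x\wedge\Delta_\P^*(y))=\exists_\Delta(x)\wedge y$; the inequality $\leq$ is immediate, and for $\geq$ I would use that each $(m,m)$ is an atom of $\P\times\P$ (shown right before the statement), so that applying the positivity axiom to $(m,m)\wedge y$ forces every positive basic element below it to equal $(m,m)$ and to lie below $y$, which is exactly what makes $\exists_\Delta(x)\wedge y$ reconstructible from the atoms below $x\wedge\Delta_\P^*(y)$.

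The converse $(2)\Rightarrow(1)$ is the heart of the matter. Assume the left adjoint $\exists_\Delta\dashv\Delta_\P^*$ exists and satisfies Frobenius. Taking $y=\pi_1^*(z)$ and $y=\pi_2^*(z)$ in Frobenius and using $\Delta_\P^*\pi_i^*(z)=z$ yields the crucial identity $\exists_\Delta(a)\wedge\pi_1^*(z)=\exists_\Delta(a\wedge z)=\exists_\Delta(a)\wedge\pi_2^*(z)$ for all $z$; with $z=a$ this gives the bound $\exists_\Delta(a)\leq(a,a)$. From this identity and openness of the projections (\ref{openess}) I would prove a coordinate-collapse lemma: every positive basic $(c,d)\leq\exists_\Delta(a)$ satisfies $c=d$ (from $(c,d)\leq\exists_\Delta(a)\wedge\pi_1^*(c)=\exists_\Delta(a)\wedge\pi_2^*(c)\leq(1,c)$ one gets $d\leq c$ by (\ref{openess}), and symmetrically $c\leq d$). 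Combining the adjunction unit $a\leq\Delta_\P^*\exists_\Delta(a)$ with $\exists_\Delta(a)\leq(a,a)$ gives $\Delta_\P^*\exists_\Delta(a)=a$, and then the positivity axiom applied to $\exists_\Delta(a)$ together with the collapse lemma expresses $a=\bigvee\{c\releps S\mid c\leq a,\ \Pos(c),\ (c,c)\leq\exists_\Delta(a)\}$.

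It remains to see that each such $c$ is an atom, which is where openness bites hardest. For such a $c$ one first checks $\exists_\Delta(c)=(c,c)$: Frobenius gives $\exists_\Delta(c)=\exists_\Delta(a)\wedge\pi_1^*(c)$, which lies above $(c,c)$ (since $(c,c)\leq\exists_\Delta(a)$ and $(c,c)\leq\pi_1^*(c)$) and below $(c,c)$ (the general bound $\exists_\Delta(\cdot)\leq(\cdot,\cdot)$). Then for any positive basic $x\leq c$, Frobenius gives $\exists_\Delta(x)=\exists_\Delta(c)\wedge\pi_1^*(x)=(c,c)\wedge\pi_1^*(x)=(x,c)$ (using $x\leq c$); since always $\exists_\Delta(x)\leq(x,x)$, we obtain $(x,c)\leq(x,x)$, and openness (\ref{openess}) forces $c\leq x$, hence $x=c$. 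Thus $c$ is an atom, so each basic $a$ is a join of atoms; and since by the positivity axiom every element of $\P$ is a join of positive basic elements, $\P$ is discrete.

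The main obstacle is precisely this reverse direction: manufacturing atoms out of the abstract left adjoint. The key is that Frobenius collapses $\exists_\Delta(a)$ onto the diagonal (the identity $\exists_\Delta(a)\wedge\pi_1^*(z)=\exists_\Delta(a)\wedge\pi_2^*(z)$) and that the openness of the product projections (\ref{openess}) lets one read off coordinatewise inequalities; both are available only because $\P$ is inductively generated, so that $\P\times\P$ exists and (\ref{openess}) holds. I expect the delicate points to be the constructive use of the positivity axiom in place of a case split on ``positive or zero'', and keeping track of which arguments of (\ref{openess}) are required to be positive.
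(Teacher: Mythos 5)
Your proof is correct in both directions, and in the forward direction it is essentially the paper's proof: you take the same $\exists_{\Delta_\P}(p)=\bigvee\{(m,m)\mid m\releps At(\P),\ m\leq p\}$, and your verification of the Galois connection via the atom equivalence ``$(m,m)\leq q$ iff $m\leq\Delta_\P^*(q)$'' repackages the paper's separate unit (proved there by density) and counit checks; the Frobenius argument (distributivity, positivity axiom, and the fact that $(m,m)$ is an atom of $\P\times\P$) is the same. In the converse direction, however, your route genuinely differs in its key lemma. The paper instantiates Frobenius at the basic element $r=(a,b)$ itself, works below $\exists_{\Delta_\P}(1)$, gets the bound $\exists_{\Delta_\P}(c)\leq(c,c)$ from the counit plus join-preservation, and then still needs a final separate argument --- that an atom $a$ with $(a,a)\leq\exists_{\Delta_\P}(p)$ satisfies $a\leq p$, proved via atomicity of $(a,a)$ in $\P\times\P$ and the triangle identity $\Delta_\P^*\exists_{\Delta_\P}\Delta_\P^*=\Delta_\P^*$. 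You instead instantiate Frobenius at the lifted elements $\pi_i^*(z)$, obtaining the diagonal-collapse identity $\exists_{\Delta_\P}(x)\wedge\pi_1^*(z)=\exists_{\Delta_\P}(x)\wedge\pi_2^*(z)$ and the bound $\exists_{\Delta_\P}(x)\leq(x,x)$ for \emph{all} $x$; this bound makes the paper's final step automatic (just apply $\Delta_\P^*$ to $(c,c)\leq\exists_{\Delta_\P}(a)\leq(a,a)$), and your atom-extraction via $\exists_{\Delta_\P}(x)=(x,c)\leq(x,x)$ is self-contained and arguably more economical. The price is that you lean on properties of the product that the paper never needs to invoke explicitly, because it only ever evaluates $\Delta_\P^*$ on basics: namely $(a,b)=\pi_1^*(a)\wedge\pi_2^*(b)$, $\Delta_\P^*\circ\pi_i^*=id_\P$, and $\pi_2^*(c)=\bigvee\{(x,c)\mid x\releps S\}$ (the last is needed to put $(c,d)\leq(1,c)$ in the form required by condition (\ref{openess})). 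These are standard for the inductively generated product of \cite{mv04}, but you should state and justify them, and also record explicitly that a positive basic pair has positive coordinates (itself a consequence of (\ref{openess})), since (\ref{openess}) is only available for pairs of positive base elements.
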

\begin{proof}
(1$\Rightarrow$ 2)
Thanks to corollary~\ref{eqatom} we think of $\P$ as an overlap algebra and hence we use the characterization of atom in lemma~\ref{lemma char. atoms}.

We define the left adjoint as follows: for $p$ in $\P$
$$\exists_{\Delta_\P}(p)\, \equiv\, \bigvee\{\, (a,a)\releps S\times S \ \mid\  a\leq p\ \& \ a\releps At(\P)\, \}$$
 Indeed, the counit disequality  $\exists_{\Delta_\P}(\, \Delta_\P^*(q)\, )\leq q$ for $q$ in $\P\times \P$ follows easily. First recall from \cite{mv04} that $\Delta_\P^*$ is defined on the base by
 $$\Delta_\P^*(\, (a,b)\, )\,
 \equiv\, a\wedge b\, \equiv\,
\bigvee\{\, x\releps S\, \mid\, x\leq a\wedge b\  \}$$
 and hence extended to the whole $\P$ by
 $\Delta_\P^*(p)=\bigvee\{\Delta_\P^*(a,b)\, \mid\, (a,b)\leq p\}$.\footnote{Note that it is  a join of a set indexed family.} Now,
 if $a$ is an atom satisfying $a\leq \Delta_\P^*(q)$, then there exists $(x,y)\releps S\times S$ such that $a\leq x\wedge y$ with
$(x,y)\leq q$, from which it follows that $(a,a)\leq (x,y)$ and hence
$(a,a)\leq q$.

The unit disequality $p \leq \Delta_\P^*(\, \exists_{\Delta_\P}(p)\, )$ for $p$ in $\P$ follows by density: given an atom $a$ such that $a \sqoverlap p$ then
$a\leq p$ and hence $(a,a)\leq\exists_{\Delta_\P}(p)$ and finally, since $a\leq a \wedge a$, we conclude  $a \leq
 \Delta_\P^*(\, \exists_{\Delta_\P}(p)\, )$, or equivalently $a \sqoverlap
 \Delta_\P^*(\, \exists_{\Delta_\P}(p)\, )$.

In order to prove Frobenius reciprocity, it is enough to show  the disequality $ \exists_{\Delta_\P}(p)\wedge r\ \leq\ \exists_{\Delta_\P}
(\, p\wedge \Delta_\P^*(r)\, )$ for $q$ in $\P$ 
 and $r$ in
$\P\times \P$. 
First of all note that by distributivity $\exists_{\Delta_\P}(p)\wedge r=
\bigvee\{\, (a,a)\wedge r\mid \, a\releps At(\P)\, \&\, a\leq p\, \}$. Hence, to prove the above disequality it is enough to show  that for any positive base element $(x,y)\releps S\times S$ such that $(x,y)\leq (a,a)\wedge r$ with $a$ atom
and $a\leq p$ we have
$(x,y) \leq  \exists_{\Delta_\P}(\, p\wedge \Delta_\P^*(r)\, )$.
 Now, recalling that if $a$ is an atom, then $(a,a)$ is an atom, too, 
from $(x,y)\leq (a,a)$, being $(x,y)$ positive, we get  $(x,y)=(a,a)$
and by condition (\ref{openess}) also that $x=y=a$, and hence $(a,a)\leq r$.
So  
 $a= \Delta_\P^*(\,(a,a)\, )\leq  \Delta_\P^*(r)$, and hence $a\leq p\wedge \Delta_\P^*(r)$. This lets us conclude  $(x,y)=(a,a)\leq \exists_{\Delta_\P}
(\, p\wedge \Delta_\P^*(r)\, )$ by the definition of $\exists_{\Delta_\P}$.

(2$\Rightarrow$1) Let us call $1$ the top of $\P$.
We claim that  any positive base element $(a,b)\releps S\times S$ satisfying $(a,b)\leq \exists_{\Delta_\P}(1)$  is a square,  namely $a=b$.
From the hypothesis $(a,b)\leq \exists_{\Delta_\P}(1) \wedge (a,b)$,  by Frobenius reciprocity we get $(a,b)\leq\exists_{\Delta_\P}(\,\Delta_\P^*(\,(a,b)\, )\, )$ and hence $(a,b)\leq\exists_{\Delta_\P}(a\wedge b)$.
Now, $a\wedge b$ = $\bigvee\{\, c\releps S\ \mid\ c\leq a\wedge b\ \} $. Therefore,
since $ \exists_{\Delta_\P}$ preserves joins (being a left adjoint), we
get
$(a,b)\leq \bigvee\{\, \exists_{\Delta_\P}(c)\ \mid\  c\releps S\ \& \ c\leq a\wedge b\ \}$. Finally, since $\exists_{\Delta_\P}(c)$ = $\exists_{\Delta_\P}(c\wedge c)$ = $\exists_{\Delta_\P}(\,\Delta_\P^*(\,(c,c)\, )\, )$ $\leq$ $(c,c)$ by the counit disequality, we conclude $(a,b)\leq \bigvee\{(c,c)\ \mid\  c\releps S\ \& \ c\leq a\wedge b\ \}$. So, from $\Pos(\, (a,b)\, )$ and from
$(a,b)\leq \bigvee\{(c,c)\ \mid\  c\releps S\ \& \ c\leq a\wedge b\ \}$, we have $a\leq a\wedge b$ and $b\leq a\wedge b$ by condition (\ref{openess}), that is $a=b$, as claimed.

Now, it follows that any positive $a\releps S$ with
$(a,a)\leq \exists_{\Delta_\P}(1)$ is an atom of $\P$: indeed, for $x\releps S$, $x$ positive and
$x\leq a$, we get $(x,a)\leq (a,a)$ and hence $(x,a)\leq \exists_{\Delta_\P}(1)$; so, by what shown above, $x=a$. 
Finally, the unit  disequality $p \leq \Delta_\P^*(\,
\exists_{\Delta_\P}(p)\, )$ for $p$ in $\P$, says that
$p\leq\bigvee\{a\wedge b\ |\ (a,b)\leq\exists_{\Delta_\P}(p)\}$. By
the positivity axiom on $\P$, in the above join 
we can assume all $a\wedge b$ to be positive from which we get that both $a$ and $b$ are so, and hence
 $\Pos(\, (a,b)\, )$. Thus $a=b$ for each $(a,b)$ in the above join. In other words, we obtain $p\leq\bigvee\{a\ |\ a\releps At(\P)\ \& \ (a,a)\leq\exists_{\Delta_\P}(p)\}$. We claim that, for an
atom $a$, the condition $(a,a)\leq\exists_{\Delta_\P}(p)$
implies $a\leq p$. We argue as follows. Since $S$ is a base for
$\P$, one has $p=\bigvee\{x\releps S\ |\ x\leq p\}$ and hence, since $\exists_{\Delta_\P}$ preserves joins,
$(a,a)\leq\bigvee\{\exists_{\Delta_\P}(x)\ |\ x\releps S\ \land\
x\leq p\}$. But $(a,a)\releps At(\P\times\P)$, since $a$ is an atom, so
$(a,a)\leq\exists_{\Delta_\P}(x)$ for some $x\leq p$. Therefore, $a$
= $\Delta_\P^*(a,a)$ $\leq$ $\Delta_\P^*\exists_{\Delta_\P}(x)$ =
$\Delta_\P^*\exists_{\Delta_\P}\Delta_\P^*(x,x)$ =
$\Delta_\P^*(x,x)$ = $x$ $\leq p$, as wished. Summing up, we have
got that $p$ is the join of the atoms below it.
 \end{proof}

\subsection{Free o-algebras}

It is well known that $\P(X)$ is the free suplattice (complete join-semilattice) over a set $X$. The following proposition shows that
$\P(X)$ is also the free o-algebra on a set $X$ of
join-generators.\footnote{Though $\mathbf{OA}$ and $\mathbf{cBa}$
share the same objects, they are very different as categories. For
instance, free complete Boolean algebras generally do not exist (see
\cite{Joh}).}

\begin{proposition}
\label{prop. free o-alg} For any o-algebra $\Q$, any set $X$ and any
map $f:X\rightarrow\Q$, there exists a unique o-morphism
$\overline{f}:\P(X)\rightarrow\Q$ such that the following diagram
commutes:
$$
\xymatrix{ X\ar@{^{(}->}[rr]^i \ar[dr]_f &  &  \mathcal{P}(X)\ar[dl]^{\overline{f}}\\
                    & \Q & \\
}
$$ where $i(x)=\{x\}$, for any $x\releps X$.
\end{proposition}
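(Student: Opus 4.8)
The plan is to let the preservation of joins force the definition. Since the singletons form a base for $\P(X)$, every subset satisfies $A=\bigvee\{\{x\}\mid x\releps A\}$, and since every o-morphism preserves set-indexed joins (Lemma~\ref{prop. symmetrizable preseves joins}), any o-morphism $\overline{f}$ with $\overline{f}(\{x\})=f(x)$ is \emph{forced} to satisfy $\overline{f}(A)=\bigvee\{f(x)\mid x\releps A\}$. I would therefore take this equality as the definition of $\overline{f}$: the join on the right-hand side exists because $\Q$ is set-based and $\{f(x)\mid x\releps A\}$ is a set-indexed family (indexed by the set $A\sub X$). Commutativity of the triangle, that is $\overline{f}(i(x))=\overline{f}(\{x\})=f(x)$, is then immediate.

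To see that $\overline{f}$ is an o-morphism, by Proposition~\ref{prop. char. o-morphism} it suffices to prove that $\overline{f}$ is symmetrizable. I would exhibit the symmetric explicitly, guessing
$$f^{-}(q)\ =\ \{x\releps X\mid f(x)\sqoverlap q\}\ ,$$
which is a legitimate subset of $X$ predicatively, since in a set-based o-algebra the relation $f(x)\sqoverlap q$ can be expressed using quantifiers ranging only over a base of $\Q$ (item~4 of Proposition~\ref{properties o-algebra}). The verification of $\overline{f}\fusim f^{-}$ then reduces to a single use of ``splitting of join'': for $A\sub X$ and $q:\Q$, the statement $\overline{f}(A)\sqoverlap q$, i.e. $\bigvee\{f(x)\mid x\releps A\}\sqoverlap q$, is equivalent to $(\exists x\releps A)(f(x)\sqoverlap q)$, and this is exactly the unfolding of $A\overlap f^{-}(q)$. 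Hence $\overline{f}$ is symmetrizable, and so an o-morphism.

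For uniqueness, the formula $\overline{f}(A)=\bigvee\{f(x)\mid x\releps A\}$ was already forced by join-preservation together with the fact that the singletons generate $\P(X)$; thus any o-morphism extending $f$ agrees with $\overline{f}$ as a map. Since, for an o-morphism, the symmetric $f^{-}$ and hence the right adjoints $f^{*}$ and $f^{-*}$ are uniquely determined by the underlying map (Proposition~\ref{prop. properties symmetric}), two o-morphisms extending $f$ coincide as o-morphisms.

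I expect the only genuine content to be the choice of $f^{-}$ and the check that $\overline{f}\fusim f^{-}$; the main, and rather minor, obstacle is the predicativity bookkeeping ensuring that $f^{-}(q)$ is a bona fide subset, everything else being dictated by the preservation of joins.
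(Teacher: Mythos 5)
Your proof is correct and follows essentially the same route as the paper's: the same forced definition $\overline{f}(U)=\bigvee\{f(x)\mid x\releps U\}$, the same explicit symmetric $f^-(q)=\{x\releps X\mid f(x)\sqoverlap q\}$ (which the paper obtains by specializing equation (\ref{eq. def. f^-}) to the singleton base), and the same verification of $\overline{f}\fusim f^-$ via splitting of join, with uniqueness forced by join-preservation. Your added remarks on predicativity of $f^-(q)$ and on uniqueness of the adjoints are harmless elaborations of what the paper leaves implicit.
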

\begin{proof}
For $U\sub X$, let us put $\overline{f}(U)=\bigvee\{f(x)$ $|$
$x\releps U\}$. This definition is compulsory:
$\overline{f}(U)=\overline{f}(\bigcup\{\{x\}$ $|$ $x\releps
U\})=$ (because
$\overline{f}$ must be an o-morphism and hence it has to preserves joins) $\bigvee\{\overline{f}(\{x\})$ $|$ $x\releps U\}=$ (because $\overline{f}\circ i$
must be $f$) $\bigvee\{f(x)$ $|$ $x\releps U\}$. We claim that $\overline{f}$ is symmetrizable. Let
$g:\Q\rightarrow\P(X)$ be the map defined as in equation (\ref{eq.
def. f^-}) (with respect to $\overline{f}$). Since $\P(X)$ is based
on singletons and $\{x\}\overlap\{a\}$ simply means $x=a$, we can
simplify the expression defining $g$ and get $g(q)=\{x\releps X$ $|$
$f(x)\sqoverlap q\}$. For all $U\sub X$ and $q:\Q$, the following
hold: $ U \overlap g(q)$ $\Leftrightarrow$ $(\exists x\releps
U)(x\releps g(q))$ $\Leftrightarrow$ $(\exists x\releps
U)(f(x)\sqoverlap q)$ $\Leftrightarrow$ $\bigvee\{f(x)$ $|$
$x\releps U\}\sqoverlap q$ $\Leftrightarrow$
$\overline{f}(U)\sqoverlap q$. Thus, $\overline{f}$ is an
o-morphism. Moreover, for all $x\releps X$, $(\overline{f}\circ
i)(x)=\overline{f}(\{x\})=\bigvee\{f(y)$ $|$ $y\releps\{x\}\}=f(x)$.
\end{proof}

\section{Overlap lattices with opposite}

\begin{definition}\label{def. oo-lattice}
An \emph{overlap lattice with opposite} (\emph{oo-lattice} for
short) is a quadruple $(\P,\leq,\sqoverlap,-)$ where $(\P,\leq)$ is
a bounded lattice (with 0 and 1 as the bottom and top elements,
respectively), $-$ is a pseudo-complement operation (that is,
$p\wedge q=0$ if and only $p\leq-q$) and $\sqoverlap$ is a binary
relation on $\mathcal{L}$ satisfying the following properties:
  \begin{itemize}
    \item $p\sqoverlap q\ \Longrightarrow\ q\sqoverlap p$\hfill (symmetry)
    \item $p\sqoverlap q\ \Longrightarrow\ p\sqoverlap (p\wedge q)$\hfill (meet closure)
    \item if $\bigvee_{i\releps I}q_i$ exists, then: $p\sqoverlap\bigvee_{i\releps I}q_i\ \Longleftrightarrow\
    (\exists i\releps I)(p\sqoverlap q_i)$\\\phantom{}\hfill (splitting of existing joins)
    \item $(\forall r:\P)(r\sqoverlap p\ \Rightarrow\ r\sqoverlap q)\ \Longrightarrow\ p\leq q$\hfill(density).
  \end{itemize}
\end{definition}

A set $S$ of elements of an oo-lattice $\P$ is a \emph{base} for $\P$ if
for every $p$ in $\P$ the join of the family $\{a\releps S$ $|$ $a\leq
p\}$ exists and equals $p$. From now on, as usual, we shall always work
with set-based structures. It is easy to check that all properties
stated in Proposition \ref{properties o-algebra} still hold for
oo-lattices. Each (set-based) o-algebra is an
example of oo-lattice: it is enough to define the opposite of an
element $p$ as $\bigvee\{a\releps S$ $|$ $a\wedge p\leq 0\}$. Vice
versa, any oo-lattice that is complete (as a lattice) is
automatically an o-algebra. In the final section of the paper we
shall present several examples of oo-lattices which are not
o-algebras.

Like o-algebras are always locales, so oo-lattices
are always distributive.

\begin{proposition}
Every oo-lattice is a distributive lattice.
\end{proposition}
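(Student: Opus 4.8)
The plan is to reduce the binary distributive law $p\wedge(q\vee r)=(p\wedge q)\vee(p\wedge r)$ to the overlap relation, mimicking the proof of infinite distributivity in Proposition~\ref{prop o-lagebras vs overt locales} but restricting attention to binary joins, which always exist in a lattice. Since an oo-lattice is in particular a bounded lattice, the meets $p\wedge q$, $p\wedge r$ and the joins $q\vee r$ and $(p\wedge q)\vee(p\wedge r)$ are all guaranteed to exist, so the ``splitting of existing joins'' axiom of Definition~\ref{def. oo-lattice} is applicable to each of them. We are also told that every item of Proposition~\ref{properties o-algebra} carries over to oo-lattices, which is exactly the machinery we need.

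Concretely, I would fix $p,q,r:\P$ and compute, for an arbitrary $s:\P$, a chain of equivalences. First, by item~3 of Proposition~\ref{properties o-algebra}, $s\sqoverlap(p\wedge(q\vee r))$ is equivalent to $(s\wedge p)\sqoverlap(q\vee r)$. Next, since $q\vee r$ exists, splitting of existing joins turns this into $(s\wedge p)\sqoverlap q$ or $(s\wedge p)\sqoverlap r$. Applying item~3 again to each disjunct yields $s\sqoverlap(p\wedge q)$ or $s\sqoverlap(p\wedge r)$; and since $(p\wedge q)\vee(p\wedge r)$ exists, a final appeal to splitting collapses this disjunction into $s\sqoverlap\big((p\wedge q)\vee(p\wedge r)\big)$.

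Having shown that $s\sqoverlap(p\wedge(q\vee r))$ and $s\sqoverlap\big((p\wedge q)\vee(p\wedge r)\big)$ are equivalent for every $s:\P$ --- in particular for every $a\releps S$ in a base --- I would conclude $p\wedge(q\vee r)=(p\wedge q)\vee(p\wedge r)$ by item~2 of Proposition~\ref{properties o-algebra}. This gives one finite distributive law, and hence distributivity of the lattice, since the two finite distributive laws are equivalent in any lattice (alternatively, the inequality $(p\wedge q)\vee(p\wedge r)\leq p\wedge(q\vee r)$ holds automatically, and the overlap computation supplies the reverse).

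The argument is essentially routine once the setup is in place; the only point requiring care --- and the real content of the statement --- is that density, through item~2, lets us test equality of elements against the base, while the splitting axiom may be invoked precisely because binary joins are guaranteed to exist in a lattice. No arbitrary or set-indexed joins beyond these are needed, so the absence of completeness in an oo-lattice is not an obstacle.
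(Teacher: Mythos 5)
Your proposal is correct and is essentially identical to the paper's proof: the paper explicitly proves this proposition as ``the finitary version of the first part of the proof of Proposition~\ref{prop o-lagebras vs overt locales}'', i.e.\ the same chain of equivalences $s\sqoverlap(p\wedge(q\vee r))$ iff $(s\wedge p)\sqoverlap q$ or $(s\wedge p)\sqoverlap r$ iff $s\sqoverlap((p\wedge q)\vee(p\wedge r))$, concluded via item~2 of Proposition~\ref{properties o-algebra}. You have simply spelled out the details that the paper leaves implicit.
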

\begin{proof}
This proof is essentially the finitary version of the first part of
that of Proposition \ref{prop o-lagebras vs overt locales}.
\end{proof}

\begin{remark} By adapting the proof of Proposition
\ref{prop o-lagebras vs overt locales}, it is easy to obtain the following
strengthening of the previous proposition: if $\bigvee_{i\releps I} q_i$ exists in an oo-lattice, then also
$\bigvee_{i\releps I}(p\wedge q_i)$ exists and is equal to
$p\wedge\bigvee_{i\releps I} q_i$.
\end{remark}

\begin{proposition}\label{clasposet}
Classically, the notion of oo-lattice and that of Boolean algebra
coincide.
\end{proposition}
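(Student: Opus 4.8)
The plan is to prove the two inclusions separately, reducing everything to two facts already available for oo-lattices: that they are distributive (the proposition that every oo-lattice is a distributive lattice) and that they satisfy all the items of Proposition~\ref{properties o-algebra}, in particular item~6. Throughout, classicality will be used only to rephrase the overlap relation in its negative form.

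First I would treat the direction from oo-lattice to Boolean algebra. Let $(\P,\leq,\sqoverlap,-)$ be an oo-lattice. The defining property of the pseudo-complement says that $q\mapsto -q$ is an antitone Galois connection with itself ($p\leq -q$ iff $q\leq -p$, both meaning $p\wedge q=0$); hence $p\leq --p$ for every $p$, and therefore $p\wedge -p=0$ by instantiating the defining property at the pair $p,-p$. So $-p$ already meets $p$ in $0$ constructively, and the only missing half is $p\vee -p=1$, which is where classical logic enters. Working classically, item~6 of Proposition~\ref{properties o-algebra} turns $\neg(r\sqoverlap p)$ into $r\wedge p=0$, so the density axiom becomes precisely the negative density of Lemma~\ref{lemma 2}, namely $(\forall r)(r\wedge p\neq 0\Rightarrow r\wedge q\neq 0)\Rightarrow p\leq q$. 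By Lemma~\ref{lemma 2} this yields $--p=p$ for every $p$. Now distributivity gives the De Morgan identity $-(p\vee -p)=-p\wedge --p=-p\wedge p=0$; applying $-$ and using $--x=x$ together with $-0=1$ I conclude $p\vee -p=--(p\vee -p)=-0=1$. Thus every element is complemented and $\P$ is a Boolean algebra.

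Next I would treat the converse. Given a Boolean algebra, its complement satisfies $p\wedge q=0\Leftrightarrow p\leq -q$, so it is a pseudo-complement, and I define $p\sqoverlap q$ to mean $p\wedge q\neq 0$. Symmetry and meet closure are immediate. For splitting of existing joins I would first record the order-theoretic fact that in any Boolean algebra binary meet distributes over any join that happens to exist: if $s=\bigvee_i q_i$ and $t$ is an upper bound of $\{p\wedge q_i\}$, then $t\vee -p\geq (p\wedge q_i)\vee -p=q_i\vee -p\geq q_i$, so $t\vee -p\geq s$ and $p\wedge s\leq p\wedge(t\vee -p)=p\wedge t\leq t$, whence $\bigvee_i(p\wedge q_i)=p\wedge s$. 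With this, $p\wedge s\neq 0$ is classically equivalent to $(\exists i)(p\wedge q_i\neq 0)$. Finally, density for $\sqoverlap$ is literally the negative density of Lemma~\ref{lemma 2}, which holds because a Boolean algebra satisfies $--p=p$ and, classically, $\neg(p\neq q)\Rightarrow p=q$; invoking Lemma~\ref{lemma 2} delivers negative density. Hence all four axioms hold and $(\P,\leq,\sqoverlap,-)$ is an oo-lattice.

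I expect the main obstacle to be the forward direction's passage from the abstract overlap and pseudo-complement data to genuine complementation: one must notice that density, once classically rewritten via item~6, is exactly negative density, and then combine Lemma~\ref{lemma 2} with distributivity and De Morgan to extract $p\vee -p=1$. In the converse direction the only non-formal point is the distributivity of meet over \emph{existing} (not necessarily all) joins, which has to be proved by hand since the Boolean algebra need not be complete.
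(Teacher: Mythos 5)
Your proof is correct and follows essentially the same route as the paper: define $p\sqoverlap q$ as $p\wedge q\neq 0$ on a Boolean algebra and use Lemma~\ref{lemma 2} for density, and conversely combine the distributivity proposition with Lemmas~\ref{lemma 1} and~\ref{lemma 2} to see that an oo-lattice is classically a complemented distributive lattice. You merely make explicit two steps the paper leaves implicit, namely the De Morgan computation extracting $p\vee -p=1$ from $--p=p$, and the distributivity of binary meet over \emph{existing} joins needed for the splitting axiom in the non-complete case.
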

\begin{proof}
The proof is analogous to that of Proposition \ref{prop. o-algebras
cBa's}. Given a Boolean algebra, define $p\sqoverlap q$ as $p\wedge
q\neq 0$ and use Lemma \ref{lemma 2} to prove density. Conversely,
suppose to have an oo-lattice. By the previous Proposition, an
oo-lattice is a distributive lattice. Moreover, by Lemmas \ref{lemma
1} and \ref{lemma 2} we get that the pseudo-complement $-$ is an involution. Summing up,
from a classical point of view an oo-lattice is precisely a complemented
distributive lattice, that is, a Boolean algebra.
\end{proof}

\subsection{Morphisms between oo-lattices}

\begin{definition}
A morphism of oo-lattices from
$(\P,\leq,\sqoverlap,-)$, with base $S$, to $(\Q,\leq,\sqoverlap,-)$ is a map
$f:\P\rightarrow\Q$ such that
$$
f(p)\sqoverlap q\ \Longleftrightarrow\ (\exists a\releps
S)\big(p\sqoverlap a \ \land \ (\forall x\releps S)(x\sqoverlap a\
\Rightarrow\ f(x)\sqoverlap q)\big)
$$
(condition 3 of Proposition \ref{prop. char. o-morphism}), for all
$p:\P$ and $q:\Q$.
\end{definition}

\begin{lemma}
A map $f$ is a morphism of oo-lattices if and only if
\begin{enumerate}
    \item $f$ is monotone and
    \item $f(p)\sqoverlap q\ \Longrightarrow\ (\exists a\releps S)\big(p\sqoverlap
a \ \land \ (\forall x\releps S)(x\sqoverlap a\ \Rightarrow\
f(x)\sqoverlap q)\big)$.
\end{enumerate}
\end{lemma}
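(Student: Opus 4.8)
The plan is to peel the biconditional that defines an oo-lattice morphism into its two halves and observe that, once monotonicity is in hand, the right-to-left half is forced. For the direction from ``morphism'' to conditions $1$ and $2$, note first that condition $2$ is literally the left-to-right implication of the defining biconditional, so there is nothing to prove there. Monotonicity I would derive from density in $\Q$: assuming $p\leq p'$, it suffices to show $q\sqoverlap f(p)\Rightarrow q\sqoverlap f(p')$ for every $q:\Q$. Given $f(p)\sqoverlap q$, the morphism condition supplies some $a\releps S$ with $p\sqoverlap a$ and $(\forall x\releps S)(x\sqoverlap a\Rightarrow f(x)\sqoverlap q)$; since $a\sqoverlap p$ (symmetry) and $p\leq p'$, item $1$ of Proposition~\ref{properties o-algebra} gives $a\sqoverlap p'$, and feeding the \emph{same} witness $a$ into the right-to-left half of the morphism condition yields $f(p')\sqoverlap q$. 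Density then delivers $f(p)\leq f(p')$, i.e.\ $f$ is monotone.

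For the converse, I would assume $f$ monotone together with condition $2$ and recover the full biconditional. Its left-to-right implication is exactly condition $2$, so the work is all in the right-to-left implication. Suppose $a\releps S$ witnesses $p\sqoverlap a$ and $(\forall x\releps S)(x\sqoverlap a\Rightarrow f(x)\sqoverlap q)$. Writing $p=\bigvee\{c\releps S\mid c\leq p\}$, which is legitimate since $S$ is a base and this join therefore exists, the relation $a\sqoverlap p$ together with splitting of existing joins from Definition~\ref{def. oo-lattice} produces a generator $c\releps S$ with $c\leq p$ and $a\sqoverlap c$, hence $c\sqoverlap a$. The assumed property of $a$ then gives $f(c)\sqoverlap q$; monotonicity gives $f(c)\leq f(p)$; and item $1$ of Proposition~\ref{properties o-algebra} (with symmetry) upgrades $q\sqoverlap f(c)$ to $q\sqoverlap f(p)$, that is, $f(p)\sqoverlap q$. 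This closes the biconditional and shows $f$ is a morphism of oo-lattices.

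The only genuine point is the right-to-left implication, and that is precisely where monotonicity is indispensable: it is what lets the overlap $f(c)\sqoverlap q$, established at a generator $c\leq p$, be transported up to $f(p)$ by item $1$ of Proposition~\ref{properties o-algebra}. Everything else is routine manipulation with the oo-lattice axioms --- symmetry and splitting of existing joins --- all of which, as remarked after Definition~\ref{def. oo-lattice}, behave exactly as in the o-algebra case. I expect no subtlety beyond checking that the join $\bigvee\{c\releps S\mid c\leq p\}$ genuinely exists so that the splitting axiom may be applied, and this is guaranteed by our standing assumption that the structures are set-based.
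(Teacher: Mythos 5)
Your proposal is correct and follows essentially the same route as the paper's own proof: monotonicity is obtained by density in $\Q$, transporting the witness $a$ along $p\leq p'$ via symmetry and item 1 of Proposition~\ref{properties o-algebra} and then invoking the right-to-left half of the morphism biconditional, while the converse direction splits $a\sqoverlap p$ over the base decomposition of $p$ and uses monotonicity plus item 1 to lift $f(c)\sqoverlap q$ to $f(p)\sqoverlap q$. The only difference is that you spell out the splitting-of-existing-joins step (and the existence of the join $\bigvee\{c\releps S\mid c\leq p\}$) that the paper leaves implicit, which is a harmless elaboration.
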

\begin{proof}
We firstly prove that each morphism $f$ is monotone. Let $p,r:\P$ be
such that $p\leq r$. We prove that $f(p)\leq f(r)$ by density. Supposed $T$ base of $\Q$,  let $y\releps T$ be such that $f(p)\sqoverlap y$. Then, there
exists $a\releps S$ such that $p\sqoverlap a$ and $(\forall x\releps
S)(x\sqoverlap a$ $\Rightarrow$ $f(x)\sqoverlap y)$. Since $p\leq
r$, then $r\sqoverlap a$. Summing up, there
exists $a\releps S$ such that $r\sqoverlap a$ and $(\forall x\releps
S)(x\sqoverlap a$ $\Rightarrow$ $f(x)\sqoverlap y)$, that is, $f(r)\sqoverlap y$.

Assume now $1$ and $2$. Let $p:\P$ and $q:\Q$ be such that $(\exists
a\releps S)(p\sqoverlap a$ $\land$ $(\forall x\releps S)(x\sqoverlap
a$ $\Rightarrow$ $f(x)\sqoverlap q))$. We claim that $f(p)\sqoverlap q$. Since $S$ is a base for $\P$,
from $p\sqoverlap a$ it follows that there exists $x\releps S$ such
that $x\sqoverlap a$, hence $f(x)\sqoverlap q$, and $x\leq p$. By
monotonicity of $f$, we get $f(x)\leq f(p)$ which, together with
$f(x)\sqoverlap q$, gives the claim.
\end{proof}

\begin{proposition}
\label{consegRED} Let $f:\P\rightarrow\Q$ be a morphism between two
oo-lattices. If $\bigvee_{i\releps I}p_i$ exists in $\P$, then also
$\bigvee_{i\releps I}f(p_i)$ exists and $f(\bigvee_{i\releps I}p_i)$
$=$ $\bigvee_{i\releps I}f(p_i)$.
\end{proposition}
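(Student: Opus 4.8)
The goal is to show that a morphism $f$ of oo-lattices preserves existing joins. The natural strategy is to mimic the proof of Lemma~\ref{prop. symmetrizable preseves joins}, but working only with \emph{existing} joins rather than arbitrary set-indexed ones, and using the intrinsic characterization of oo-lattice morphisms (condition 3 of Proposition~\ref{prop. char. o-morphism}) in place of symmetrizability. The plan is to first establish that $\bigvee_{i\releps I}f(p_i)$ actually exists, and then prove the equality $f(\bigvee_{i\releps I}p_i)=\bigvee_{i\releps I}f(p_i)$ via the density axiom, testing both sides against an arbitrary $y$ in a base $T$ of $\Q$.

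For the equality, supposing $\bigvee_{i\releps I}p_i$ exists in $\P$, the plan is to compute, for $y\releps T$:
\begin{equation*}
f\Big(\bigvee_{i\releps I}p_i\Big)\sqoverlap y
\ \Longleftrightarrow\ (\exists a\releps S)\big(\textstyle\bigvee_{i\releps I}p_i\sqoverlap a\ \land\ (\forall x\releps S)(x\sqoverlap a\Rightarrow f(x)\sqoverlap y)\big)
\end{equation*}
by the defining morphism condition. Now splitting of existing joins applies to $\bigvee_{i\releps I}p_i\sqoverlap a$, turning it into $(\exists i\releps I)(p_i\sqoverlap a)$; pushing the existential quantifier outward, this is equivalent to $(\exists i\releps I)$ of the morphism condition for $p_i$, i.e.\ to $(\exists i\releps I)(f(p_i)\sqoverlap y)$. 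The delicate point is that in order to apply splitting of existing joins on the codomain side one needs $\bigvee_{i\releps I}f(p_i)$ to exist; so the existence of this join must be secured \emph{before} invoking its splitting property. Once existence is in hand, the chain of equivalences closes as $(\exists i\releps I)(f(p_i)\sqoverlap y)\Leftrightarrow \bigvee_{i\releps I}f(p_i)\sqoverlap y$, and since this holds for all $y\releps T$, the set-based density axiom (\ref{eq. set-based density}) yields equality of the two elements in both directions.

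The main obstacle, therefore, is precisely the \emph{existence} of $\bigvee_{i\releps I}f(p_i)$ in $\Q$, which is not automatic in a merely base-having (non-complete) oo-lattice. I expect the right way to produce this join is to exhibit a concrete candidate and verify it is the least upper bound using density and the morphism condition. A natural candidate is $f(\bigvee_{i\releps I}p_i)$ itself: one shows each $f(p_i)\leq f(\bigvee_{i\releps I}p_i)$ by monotonicity of $f$ (established in the preceding Lemma), so $f(\bigvee_{i\releps I}p_i)$ is an upper bound; then, to see it is the \emph{least} upper bound, one takes any upper bound $c$ of $\{f(p_i)\}_{i\releps I}$ and proves $f(\bigvee_{i\releps I}p_i)\leq c$ by density, reducing $y\sqoverlap f(\bigvee_{i\releps I}p_i)$ via the morphism condition and splitting of existing joins to $(\exists i\releps I)(y\sqoverlap f(p_i))$, whence $y\sqoverlap c$ since each $f(p_i)\leq c$ gives $y\sqoverlap f(p_i)\Rightarrow y\sqoverlap c$ by item~1 of Proposition~\ref{properties o-algebra}. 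This simultaneously establishes that the join exists, equals $f(\bigvee_{i\releps I}p_i)$, and thereby proves the proposition; the subtlety to watch is keeping all quantifiers over the base sets $S$ and $T$ so that every step remains predicatively valid.
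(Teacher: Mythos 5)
Your proposal is correct and takes essentially the same route as the paper's own proof: exhibit $f(\bigvee_{i\releps I}p_i)$ as the candidate least upper bound, get the upper-bound part from monotonicity, and establish leastness by density, using the morphism condition together with splitting of existing joins to pass from $f(\bigvee_{i\releps I}p_i)\sqoverlap y$ to $f(p_i)\sqoverlap y$ for some $i$. The only (harmless) difference is that you invoke the defining biconditional of morphisms directly to recover $f(p_i)\sqoverlap y$, where the paper re-derives that step inline by picking $x\releps S$ with $x\leq p_i$, $x\sqoverlap a$ and using monotonicity.
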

\begin{proof}
We claim that $f(\bigvee_{i\releps I}p_i)$ is the least upper bound
of the family $\{f(p_i)\}_{i\releps I}$. Clearly it is an upper
bound since $f$ is monotone. Let $r$ be another upper bound (that
is, $f(p_i)\leq r$ for any $i\releps I$); we must show that
$f(\bigvee_{i\releps I}p_i)\leq r$. Supposed $T$ base of $\Q$, 
let $y\releps T$ be such that
$f(\bigvee_{i\releps I}p_i)\sqoverlap y$. Then, since $f$ is a
morphism and $\sqoverlap$ splits all existing joins, there exist
$a\releps S$ and $i\releps I$ such that $p_i\sqoverlap a$ and
$(\forall x\releps S)(x\sqoverlap a$ $\Rightarrow$ $f(x)\sqoverlap
y)$. In particular, $f(p_i)\sqoverlap y$ (take $x\leq p_i$ such that
$x\sqoverlap a$ and use monotonicity of $f$); together with
$f(p_i)\leq r$, this gives $r\sqoverlap y$. The claim follows by
density.
\end{proof}

\begin{proposition}
The following hold:
\begin{enumerate}
    \item for every oo-lattice $\P$, the identity function $id_\P:\P\rightarrow\P$ is an oo-lattice morphism;
    \item oo-lattice morphisms are closed under composition of functions.
\end{enumerate}
\end{proposition}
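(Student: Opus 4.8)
The plan is to reduce both claims to the characterization lemma proved just above, which states that a map is an oo-lattice morphism precisely when it is monotone and satisfies the forward (left-to-right) direction of the defining biconditional. Using this spares us from re-establishing the backward direction in each case, since monotonicity together with the forward implication suffices.

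For the first item, monotonicity of $id_\P$ is immediate, so I would only verify the forward implication: assuming $p\sqoverlap q$, I must produce a base element $a$ with $p\sqoverlap a$ and $(\forall x\releps S)(x\sqoverlap a\Rightarrow x\sqoverlap q)$. By meet closure, $p\sqoverlap(p\wedge q)$; since $S$ is a base, $p\wedge q=\bigvee\{a\releps S\mid a\leq p\wedge q\}$, and splitting of existing joins yields some $a\releps S$ with $a\leq p\wedge q$ and $p\sqoverlap a$. This $a$ works: if $x\sqoverlap a$, then from $a\leq p\wedge q\leq q$ and item 1 of Proposition \ref{properties o-algebra} (which holds for oo-lattices) we obtain $x\sqoverlap q$, as required.

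For the second item, let $f:\P\rightarrow\Q$ and $g:\Q\rightarrow\mathcal{R}$ be oo-lattice morphisms, with $S$, $T$ bases of $\P$, $\Q$. Again $g\circ f$ is monotone as a composite of monotone maps, so I focus on the forward implication. Assume $g(f(p))\sqoverlap r$. Applying the morphism property of $g$, there is $b\releps T$ with $f(p)\sqoverlap b$ and $(\forall y\releps T)(y\sqoverlap b\Rightarrow g(y)\sqoverlap r)$; applying then the morphism property of $f$ to $f(p)\sqoverlap b$, there is $a\releps S$ with $p\sqoverlap a$ and $(\forall x\releps S)(x\sqoverlap a\Rightarrow f(x)\sqoverlap b)$. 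I claim this $a$ witnesses the required condition for $g\circ f$: we already have $p\sqoverlap a$, and if $x\releps S$ satisfies $x\sqoverlap a$ then $f(x)\sqoverlap b$.

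The main obstacle, and the step deserving the most care, is concluding $g(f(x))\sqoverlap r$ from $f(x)\sqoverlap b$: the hypothesis on $b$ only guarantees $g(y)\sqoverlap r$ for base elements $y\releps T$, whereas $f(x)$ need not lie in $T$. To bridge this gap I would descend to the base. Since $T$ is a base, $f(x)=\bigvee\{y\releps T\mid y\leq f(x)\}$, and from $b\sqoverlap f(x)$ (symmetry) splitting of existing joins produces $y\releps T$ with $y\leq f(x)$ and $y\sqoverlap b$. Then $g(y)\sqoverlap r$ by the property of $b$, while monotonicity of $g$ gives $g(y)\leq g(f(x))$, so item 1 of Proposition \ref{properties o-algebra} yields $g(f(x))\sqoverlap r$. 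This establishes the forward implication for $g\circ f$, completing the proof; both arguments are the finitary, existing-join analogues of the earlier remark that the composition of o-morphisms is again an o-morphism.
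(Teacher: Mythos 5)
Your proof is correct and follows essentially the same route as the paper: both reduce the claim to the preceding characterization lemma (monotonicity plus the forward implication) and, for composition, use the identical argument of descending to the base $T$ to bridge from $f(x)\sqoverlap b$ to $g(f(x))\sqoverlap r$. The only cosmetic difference is in item 1, where you pass through $p\wedge q$ via meet closure while the paper splits the join $q=\bigvee\{a\releps S\mid a\leq q\}$ directly; both are equally valid.
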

\begin{proof}
(1) Let $p,q:\P$ be such that $p\sqoverlap q$. We must show that
$(\exists a\releps S)$ $(p\sqoverlap a$ $\land$ $(\forall x\releps
S)(x\sqoverlap a$ $\Rightarrow$ $x\sqoverlap q))$, that is,
$(\exists a\releps S)(p\sqoverlap a$ $\land$ $a\leq q)$; that
holds because $S$ is a base.

(2) Let $f:\P\rightarrow\Q$ and $g:\Q\rightarrow\mathcal{R}$ be two
oo-lattice morphisms and assume $g(f(p))\sqoverlap r$. Provided that
$T$ is a base for $\Q$, since $g$ is a morphism we can find an
element $b\releps T$ such that $f(p)\sqoverlap b$ and $(\forall
y\releps T)(y\sqoverlap b$ $\Rightarrow$ $g(y)\sqoverlap r)$. Since
$f(p)\sqoverlap b$ and $f$ is a morphism, there exists $a\releps S$
(where $S$ is a base for $\P$) such that $p\sqoverlap a$ and
$(\forall x\releps S)(x\sqoverlap a$ $\Rightarrow$ $f(x)\sqoverlap
b)$. We claim that this same element $a\releps S$ works for $g\circ
f$, that is, $x\sqoverlap a$ entails $(g\circ f)(x)\sqoverlap r$. In fact, if $x\sqoverlap a$, then
$f(x)\sqoverlap b$. Then we can find $y\releps T$ such that $y\leq f(x)$ and
$y\sqoverlap b$ (since $T$ is a base) and hence $g(y)\sqoverlap r$
(thanks to the properties of $b$). On the other hand $g(y)\leq
g(f(x))$ because $g$ is monotone; hence
$g(f(x))\sqoverlap r$.
\end{proof}

\begin{definition}
Let \textbf{OOLat} be the category of oo-lattices as objects and
oo-lattice morphisms as arrows.
\end{definition}

\begin{proposition}\label{prop. CLASS char. oo-morphism}
Classically and impredicatively, a map between Boolean algebras is a
morphism of oo-lattices if and only if it preserves all joins which
exist in the domain.
\end{proposition}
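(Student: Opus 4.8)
The plan is to prove the two implications separately. The forward direction---that every oo-lattice morphism preserves all existing joins---is already Proposition~\ref{consegRED}, applied to the classical situation in which, by Proposition~\ref{clasposet}, a Boolean algebra is exactly an oo-lattice with $p\sqoverlap q$ meaning $p\wedge q\neq 0$. So for that half there is nothing further to do.

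For the converse, suppose $f:\P\rightarrow\Q$ preserves every existing set-indexed join. I would verify that $f$ is an oo-lattice morphism via the characterization lemma stated just before Proposition~\ref{consegRED}, namely by checking that $f$ is monotone and that $f(p)\sqoverlap q$ entails the existence of some $a$ with $p\sqoverlap a$ such that $f(x)\sqoverlap q$ holds for every $x$ with $x\sqoverlap a$. Monotonicity is immediate, since $p\leq r$ gives $p\vee r=r$ and hence $f(p)\leq f(p)\vee f(r)=f(p\vee r)=f(r)$. Reading the remaining clause classically and taking the whole (impredicatively set-sized) carrier as a base, it becomes: whenever $f(p)\wedge q\neq 0$ there is a nonzero $a\leq p$ with $a\wedge x=0$ for every $x$ satisfying $f(x)\wedge q=0$. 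Such an $a$ is exactly the desired witness, since $p\wedge a=a\neq 0$ gives $p\sqoverlap a$, while the stated condition on $a$ is precisely the contrapositive form of the inner implication $x\sqoverlap a\Rightarrow f(x)\sqoverlap q$.

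The core is thus the following claim, which I would prove contrapositively. Writing $N$ for the collection of all $x$ with $f(x)\wedge q=0$, the claim is: if no nonzero $a\leq p$ satisfies $a\wedge x=0$ for all $x\in N$ (that is, if $N$ is \emph{dense below} $p$), then $f(p)\wedge q=0$. First, $N$ is an ideal: it is downward closed by monotonicity, and it is closed under binary joins because $f(x\vee y)=f(x)\vee f(y)\leq -q$ whenever $f(x),f(y)\leq -q$. Assuming $N$ dense below $p$, I claim $\bigvee\{x\mid x\in N,\ x\leq p\}=p$. Indeed, any upper bound $v$ of this family must satisfy $p\wedge -v=0$: otherwise density would furnish a nonzero $z\in N$ with $z\leq p\wedge -v$, and then $z\leq v$ (as $z$ is in the family) together with $z\leq -v$ would force $z=0$. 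Hence $p\leq v$, so $p$ is the least upper bound and this join exists. Since $f$ preserves it and every member $f(x)$ lies below $-q$, we obtain $f(p)=\bigvee\{f(x)\mid x\in N,\ x\leq p\}\leq -q$, i.e.\ $f(p)\wedge q=0$. The contrapositive yields the required witness $a$, and the characterization lemma then gives that $f$ is an oo-lattice morphism.

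The point requiring care---and the reason the complete-case argument of Proposition~\ref{prop. CLASS: o-morphism = join-preserving} does not transfer verbatim---is that without completeness the right adjoint $f^*$ need not exist, so the element $-f^*(-q)$ and the join $\bigvee N$ used there are unavailable a priori. The unblocking observation is that one never needs $\bigvee N$ in general: it is exactly the hypothesis of density below $p$ that forces the sub-join $\bigvee\{x\in N\mid x\leq p\}$ to exist and to equal $p$, which is precisely the join to which join preservation can be applied. I expect this existence step---recognizing that $p$ is the unique upper bound, hence the supremum---to be the only genuinely delicate part of the proof.
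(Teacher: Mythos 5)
Your proof is correct, and it takes a genuinely different route from the paper's. The forward half is the same in both (Proposition~\ref{consegRED} together with Proposition~\ref{clasposet}). For the converse, however, the paper never argues inside $\B$ itself: it extends the join-preserving map $f:\B\rightarrow\B'$ to a join-preserving map $\overline{f}:DM(\B)\rightarrow DM(\B')$ between the Dedekind--MacNeille completions (citing the literature for this extension step), applies the already-proved complete case (Propositions~\ref{prop. CLASS: o-morphism = join-preserving} and~\ref{prop. char. o-morphism}) to obtain condition~(\ref{RED}) for $\overline{f}$, and then restricts back to $f$, using that any base of $\B$ is also a base of $DM(\B)$. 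You instead verify the morphism condition directly in $\B$, with no completion: writing $N=\{x\mid f(x)\wedge q=0\}$, your density argument (via the complement $-v$) shows that if no nonzero $a\leq p$ is disjoint from all of $N$, then $p$ is itself the least upper bound of $\{x\releps N \mid x\leq p\}$, so this join exists in $\B$, join preservation applies, and $f(p)\wedge q=0$; the classical contrapositive then produces exactly the witness demanded by the Lemma preceding Proposition~\ref{consegRED}. Both arguments are sound, so the comparison is about what each buys. The paper's route is short but leans on a nontrivial cited fact---that join-preserving maps between Boolean algebras extend to join-preserving maps of their DM completions---whose standard verification turns on essentially the same Boolean manipulation you carry out directly (one shows the ideal $\{b\mid f(b)\leq q\}$ is normal by splitting an upper bound as $c\vee -a$); your route is self-contained and isolates the real point, namely that the density hypothesis forces the one join you need to exist already in $\B$, so completeness is never invoked. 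One pedantic remark: the definition of oo-lattice morphism is phrased relative to a base $S$, and you take the whole (classically set-sized) carrier as base; to get~(\ref{RED}) for an arbitrary base $S$, simply shrink your witness $a$ to a nonzero $a'\releps S$ with $a'\leq a$, which inherits both required properties, and note that the inner universally quantified condition only becomes weaker when its quantifier is restricted to $S$.
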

\begin{proof}
From Proposition \ref{consegRED} it follows that every oo-lattice
morphism is a join-preserving map. Conversely, let
$f:\mathcal{B}\rightarrow\mathcal{B'}$ be a join-preserving map
between two Boolean algebras. Then $f$ extends uniquely to a
join-preserving map $\overline{f}:DM(\mathcal{B})\rightarrow
DM(\mathcal{B'})$ between the complete boolean algebras
defined as the Dedekind-MacNeille completions  of $\mathcal{B}$ and $\mathcal{B'}$, respectively (see
\cite{Joh}). It
follows from Propositions \ref{prop. CLASS: o-morphism =
join-preserving} and \ref{prop. char. o-morphism} that
$\overline{f}$ satisfies equation (\ref{RED}) for all
$p:DM(\mathcal{B})$ and $q:DM(\mathcal{B'})$ and for any base $S$ of
$DM(\mathcal{B})$. In particular, (\ref{RED}) holds for $f$ once we
note that every base $S$ for $\B$ is also a base for $DM(\B)$.
\end{proof}

\begin{definition}
We write \textbf{Ba}$_{\bigvee}$ for the category of Boolean
algebras and maps preserving all existing joins.
\end{definition}

\begin{corollary}
Classically, the categories \textbf{Ba}$_{\bigvee}$ and
\textbf{OOLat} are equivalent.
\end{corollary}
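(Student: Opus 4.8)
The plan is to follow the pattern of the earlier equivalence between $\mathbf{OA}$ and $\mathbf{cBa}_{\bigvee}$, assembling the two propositions just established into a pair of mutually inverse functors. I would define a functor $F\colon\mathbf{OOLat}\rightarrow\mathbf{Ba}_{\bigvee}$ sending an oo-lattice $(\P,\leq,\sqoverlap,-)$ to its underlying structure $(\P,\leq,-)$, and a functor $G\colon\mathbf{Ba}_{\bigvee}\rightarrow\mathbf{OOLat}$ sending a Boolean algebra $\B$ to the quadruple $(\B,\leq,\sqoverlap,-)$ where $p\sqoverlap q$ is defined as $p\wedge q\neq 0$. Both functors act as the identity on the underlying maps.

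The well-definedness of $F$ and $G$ on objects is exactly the content of Proposition \ref{clasposet}: classically an oo-lattice is precisely a Boolean algebra, and conversely the relation $p\wedge q\neq 0$ turns any Boolean algebra into an oo-lattice. Their well-definedness on arrows rests on Proposition \ref{prop. CLASS char. oo-morphism}: a map between Boolean algebras is an oo-lattice morphism if and only if it preserves all existing joins, so $F$ and $G$ carry morphisms of one category to morphisms of the other. Functoriality is immediate, since both are the identity on underlying functions and hence respect identities and composition on the nose.

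It then remains to check that $F$ and $G$ are mutually inverse. The composite $F\circ G$ is the identity on $\mathbf{Ba}_{\bigvee}$: adding the canonical overlap and then forgetting it returns the original Boolean algebra. For $G\circ F$ I would show it is also the identity on $\mathbf{OOLat}$, which amounts to verifying that the overlap relation of any oo-lattice classically coincides with $p\wedge q\neq 0$. This is precisely item 6 of Proposition \ref{properties o-algebra} --- which holds verbatim for oo-lattices --- read classically: from $\neg(p\sqoverlap q)\Leftrightarrow p\wedge q=0$ one obtains $p\sqoverlap q\Leftrightarrow p\wedge q\neq 0$ by double negation. Hence the overlap is uniquely determined by the lattice structure, and $G\circ F$ reinstates the very same relation; the correspondence is in fact an isomorphism of categories, of which the asserted equivalence is a special case.

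The main obstacle, if any, is conceptual rather than computational: one must ensure that the morphism classes of the two categories genuinely match, since oo-lattice morphisms are defined by the intrinsic condition (3) of Proposition \ref{prop. char. o-morphism} rather than directly as join-preserving maps. Proposition \ref{prop. CLASS char. oo-morphism} removes this worry, although it is proved only impredicatively (via the Dedekind--MacNeille completion); this is harmless here, since we work classically throughout. With both the object and the morphism correspondences in place and seen to be mutually inverse, the equivalence of $\mathbf{Ba}_{\bigvee}$ and $\mathbf{OOLat}$ follows.
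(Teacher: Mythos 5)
Your proposal is correct and follows essentially the same route as the paper, whose proof is precisely the one-line citation of Propositions \ref{clasposet} and \ref{prop. CLASS char. oo-morphism}; you have simply made explicit the functors, the morphism correspondence, and the (classically automatic) uniqueness of the overlap relation that the paper leaves implicit. The extra verification that $G\circ F$ is the identity, via item 6 of Proposition \ref{properties o-algebra} read classically, is a sound and welcome elaboration rather than a deviation.
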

\begin{proof}
By Propositions \ref{clasposet} and \ref{prop. CLASS char.
oo-morphism}.
\end{proof}

\subsection{Richer overlap structures}

As we have seen in the previous pages, the notion of oo-lattice
turns out to be classically equivalent to that of Boolean algebra;
so it is a constructive version of the latter. Even if it seems the
minimal structure enjoying such a property, it is by no means the
only one. For instance, it is quite natural to consider also Heyting
and Boolean algebras with overlap (o-Heyting and o-Boolean
algebras). The idea is simply to add an overlap relation (satisfying
all the axioms for $\sqoverlap$ listed in Definition \ref{def.
oo-lattice}) to the usual algebraic structures.

\begin{definition}\label{def. o-structures}
An \emph{overlap Heyting algebra} (\emph{o-Ha} for short) is an
oo-lattice whose underlying lattice is a Heyting algebra. An
\emph{overlap Boolean algebra} (\emph{o-Ba} for short) is an o-Ha
whose underlying lattice is a Boolean algebra.
\end{definition}

Here is an example of o-Ba (examples of other o-structures are being
given below): the family of all \emph{recursive} subsets of
$\mathbb{N}$ (the set of natural numbers). It works since recursive
subsets are closed under union, intersection and complement;
moreover, all singletons are recursive, hence density holds. Note
also, that this o-Ba is an example of oo-lattice which is not an
o-algebra (that is, it is not closed under arbitrary joins),
otherwise all subsets of $\mathbb{N}$ would be recursive (each
subset being a union of singletons).

\begin{proposition}\label{prop class char o-structures}
Classically, the notions of oo-lattice, o-Ha, o-Ba and that of
Boolean algebra all coincide.
\end{proposition}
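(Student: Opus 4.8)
The plan is to reduce everything to Proposition~\ref{clasposet}, which already identifies oo-lattices with Boolean algebras classically, and then to exploit the purely definitional containments among the three overlap structures, so that essentially no new work is required.

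First I would record the trivial inclusions coming straight from Definition~\ref{def. o-structures}: every o-Ba is an o-Ha, and every o-Ha is an oo-lattice. Hence, as classes of structures, the o-Ba's are contained among the o-Ha's, which in turn are contained among the oo-lattices. It is therefore enough to prove the single reverse implication that, classically, every oo-lattice is (an expansion of) an o-Ba; this will collapse the chain and, together with Proposition~\ref{clasposet}, also yield the coincidence with Boolean algebras.

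Next I would invoke Proposition~\ref{clasposet}: classically the underlying bounded lattice $(\P,\leq,-)$ of any oo-lattice is a Boolean algebra. Two observations then finish the reverse implication. On the one hand, a classical Boolean algebra is in particular a Heyting algebra, the implication being $p\to q=-p\vee q$; so the underlying lattice is a Heyting algebra and the oo-lattice qualifies as an o-Ha. On the other hand, having a Boolean underlying lattice is precisely the extra clause in the definition of an o-Ba, so the very same structure is already an o-Ba. I would also note that the overlap relation carried along in these passages is neither lost nor ambiguous: since item~6 of Proposition~\ref{properties o-algebra} holds for oo-lattices, we have $\neg(p\sqoverlap q)\Leftrightarrow p\wedge q=0$, whence classically $p\sqoverlap q\Leftrightarrow p\wedge q\neq 0$, so the relation is uniquely recoverable from the lattice structure.

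The main point is that there is no genuinely hard step here: all the substance was already discharged in Proposition~\ref{clasposet} (and in Lemmas~\ref{lemma 1} and~\ref{lemma 2} behind it). The only thing requiring a moment's care is the bookkeeping: checking that a classical Boolean algebra is indeed a Heyting algebra so that the ``Heyting'' clause of an o-Ha is automatic, and verifying that the overlap relation matches up under these reducts and expansions. Once this is done, the four notions are seen to carve out exactly the same structures classically.
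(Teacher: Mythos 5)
Your proof is correct and follows essentially the same route as the paper, whose entire proof is the one-line reduction ``By the proof of Proposition~\ref{clasposet}'': you simply make explicit the bookkeeping that reduction leaves implicit (the definitional inclusions o-Ba $\Rightarrow$ o-Ha $\Rightarrow$ oo-lattice, the fact that a classical Boolean algebra is a Heyting algebra via $p\to q=-p\vee q$, and the unique recoverability of $\sqoverlap$ as $p\wedge q\neq 0$).
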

\begin{proof}
By the proof of Proposition \ref{clasposet}.
\end{proof}

We adopt for all o-structures the same notion of morphism we used
for oo-lattices.

\begin{definition}
Let $\mathbf{OHa}$ be the full subcategory of $\mathbf{OOLat}$ whose
objects are the o-Heyting algebras. We write $\mathbf{OBa}$ for the
full subcategory of $\mathbf{OHa}$ with o-Boolean algebras as
objects.
\end{definition}

\begin{proposition}
Classically, the categories \textbf{Ba}$_{\bigvee}$, \textbf{OOLat},
\textbf{OHa} and \textbf{OBa} are all equivalent.
\end{proposition}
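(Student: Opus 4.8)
The plan is to derive this as an immediate consequence of the work already done, since the category $\mathbf{OOLat}$ has been pinned down against $\textbf{Ba}_{\bigvee}$ and the remaining two categories are full subcategories of $\mathbf{OOLat}$. The crucial observation is that passing to a full subcategory changes nothing about the category once one knows that the object classes coincide: for a full subcategory the hom-sets between any two shared objects are literally the same as in the ambient category. So the whole argument reduces to checking that, classically, $\mathbf{OOLat}$, $\mathbf{OHa}$ and $\mathbf{OBa}$ have exactly the same objects, and then invoking the earlier corollary identifying $\textbf{Ba}_{\bigvee}$ with $\mathbf{OOLat}$ up to equivalence.

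First I would recall Proposition \ref{prop class char o-structures}, which tells us that classically the notions of oo-lattice, o-Ha and o-Ba all coincide with that of Boolean algebra. Concretely, given any oo-lattice, Proposition \ref{clasposet} shows it is classically a Boolean algebra; a Boolean algebra is in particular a Heyting algebra (with $a\rightarrow b$ computed as $-a\vee b$), so its underlying lattice satisfies the defining requirement of an o-Ha, and being Boolean it likewise satisfies that of an o-Ba. Hence every object of $\mathbf{OOLat}$ is automatically an object of both $\mathbf{OHa}$ and $\mathbf{OBa}$, and conversely these are by definition objects of $\mathbf{OOLat}$. Thus the three categories share the same class of objects classically.

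Next I would combine this with the fact, recorded in the definitions preceding the statement, that $\mathbf{OHa}$ is the \emph{full} subcategory of $\mathbf{OOLat}$ on the o-Heyting algebras and $\mathbf{OBa}$ the \emph{full} subcategory of $\mathbf{OHa}$ on the o-Boolean algebras. Fullness together with the coincidence of objects forces the inclusion functors $\mathbf{OBa}\hookrightarrow\mathbf{OHa}\hookrightarrow\mathbf{OOLat}$ to be isomorphisms of categories classically: they are the identity on objects and bijective on each hom-set. In particular all three categories are equivalent. Finally I would append the earlier corollary $\textbf{Ba}_{\bigvee}\simeq\mathbf{OOLat}$ to the chain, concluding that all four categories $\textbf{Ba}_{\bigvee}$, $\mathbf{OOLat}$, $\mathbf{OHa}$ and $\mathbf{OBa}$ are classically equivalent.

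I do not expect a genuine obstacle here, since the statement is essentially a bookkeeping corollary. The only point requiring a moment's care is to be explicit that fullness is what lets us ignore the morphisms entirely: without fullness, coincidence of objects would not by itself guarantee that the subcategory retains all the arrows. Once that is spelled out, the result follows by composing the identification of object classes (from Proposition \ref{prop class char o-structures}) with the previously established equivalence $\textbf{Ba}_{\bigvee}\simeq\mathbf{OOLat}$.
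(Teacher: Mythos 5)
Your proposal is correct and takes essentially the same route as the paper: the paper's own proof just cites Proposition \ref{prop class char o-structures} (classical coincidence of the objects) together with Proposition \ref{prop. CLASS char. oo-morphism} (classical coincidence of the morphisms), which is precisely the content you unpack via fullness of the inclusions $\mathbf{OBa}\hookrightarrow\mathbf{OHa}\hookrightarrow\mathbf{OOLat}$ plus the already-established equivalence of \textbf{Ba}$_{\bigvee}$ with $\mathbf{OOLat}$. Your explicit remark that fullness is what allows the morphisms to be ignored once the object classes agree is a helpful clarification of what the paper leaves implicit, not a different argument.
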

\begin{proof}
By Propositions \ref{prop class char o-structures} and \ref{prop.
CLASS char. oo-morphism}.
\end{proof}

On the contrary, from a constructive point of view, the situation is
completely different and can be summarized by the following picture.
\begin{equation}\label{figure o-structures}
\xymatrix{ \textrm{Heyting algebras} & \textrm{oo-lattices}\\
                \textrm{Boolean algebras}\ar@{=>}[u]  &
                \textrm{o-Heyting algebras}\ar@{=>}[u]\ar@{=>}[ul] & \\
                 & \textrm{o-Boolean algebras}\ar@{=>}[u]\ar@{=>}[ul]
}
\end{equation}
No other ``implication'' holds constructively, as it is shown by the
following counterexamples and remarks.

Here below is an example of o-Ha which is not a Boolean algebra, constructively. Classically, this
turns out to be nothing else but the Boolean algebra of finite-cofinite
subsets. Let $X$ be a set. We say that a subset $K\sub X$ is
\emph{finite} if either $K=\emptyset$ or $K=\{x_1,\ldots,x_n\}$ for
some $x_1\ldots x_n\releps X$. Clearly, the union of two finite
subsets is finite, while the intersection is not (unless the
equality of $X$ is decidable: see \cite{cisa2}).

\begin{definition}\label{def. finite-cofinite oo-lattice}
For any set $X$, let $\F(X)$ be the sub-family of $\P(X)$ defined by
the following condition (for $A\sub X$):
\begin{equation}\label{eq. def. finite-cofinite
oo-lattice}A:\F(X)\quad\stackrel{def}{\Longleftrightarrow}\quad(\
\exists K\sub X,\ K\textrm{ finite}\ )(\ A\sub--K\ \vee\ -K\sub A\
)\,.\end{equation}
\end{definition}

\begin{proposition}
For every set $X$, the collection $\F(X)$ is an o-Heyting algebra, but
it is neither a Boolean algebra nor an o-algebra, in general.
\end{proposition}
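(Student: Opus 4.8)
The plan is to realise $\F(X)$ as a sub-structure of the o-algebra $\P(X)$, inheriting the order $\sub$, the binary operations, the pseudo-complement $A\mapsto -A$, and the overlap $\overlap$ (inhabited intersection, equation (\ref{eq. def. overlap})), and then to check the axioms of Definition \ref{def. oo-lattice} together with the existence of a Heyting implication. First I would verify closure: $\emptyset,X\releps\F(X)$ (take $K=\emptyset$); $\F(X)$ is closed under $\cup$ and $\cap$ by a short case analysis on the defining disjuncts (for instance, if $-K\sub A$ then $-K\sub A\cup B$, while if $A\sub --K$ then $A\cap B\sub --K$, and if $-K\sub A$, $-L\sub B$ then $-(K\cup L)\sub A\cap B$); and $\F(X)$ is closed under $-$, since $A\sub --K$ gives $-K=---K\sub -A$ and $-K\sub A$ gives $-A\sub --K$, using $---K=-K$. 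This makes $\F(X)$ a bounded distributive sublattice of $\P(X)$ with inherited pseudo-complement, and the singletons $\{x\}$, which all lie in $\F(X)$ because $\{x\}\sub --\{x\}$, form a base, so density is immediate exactly as for $\P(X)$. Symmetry and meet closure of $\overlap$ are inherited.

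The one axiom that is not purely formal is the \emph{splitting of existing joins}, and here is where the main work lies. I would prove the lemma: whenever $\bigvee_{i}B_i$ exists in $\F(X)$ it coincides with the union $U:=\bigcup_i B_i$ computed in $\P(X)$ (so in particular $U\releps\F(X)$); splitting then reduces to the corresponding property in $\P(X)$. Let $J$ be the join in $\F(X)$, so $U\sub J$. If $J$ is of finite type, $J\sub --K$, then $U\sub --K$, whence $U\releps\F(X)$ and $J=U$. The delicate case is $J$ of cofinite type, $-K\sub J$ with $K$ finite. The trick is: for any $y$ with $y\notin K$, the set $U\cup -(K\cup\{y\})$ is again an upper bound of $\{B_i\}$ lying below $J$, so by minimality $J=U\cup -(K\cup\{y\})$; since $y\releps -K\sub J$ while $y\notin -(K\cup\{y\})$, disjunctive syllogism gives $y\releps U$. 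Hence $-K\sub U$ and $J=U\cup -K=U$. This argument is fully constructive and is, I expect, the crux of the whole proposition.

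To see that the underlying lattice is Heyting, and not merely distributive with a pseudo-complement, I would show that the relative implication $A\to B=\{x\releps X\mid x\releps A\Rightarrow x\releps B\}$ of $\P(X)$ already lies in $\F(X)$, by cases: if $-L\sub B$ then $-L\sub B\sub(A\to B)$; if $A\sub --K$ and $B\sub --L$ then $-K\sub(A\to B)$, since a point of $-K$ cannot lie in $A\sub --K$; and if $-K\sub A$ and $B\sub --L$ then $(A\to B)\sub --(K\cup L)$. Thus $\F(X)$ is an o-Heyting algebra.

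Finally, the two negative claims. For non-Booleanness, take $X=\mathbb{N}$ and $A=\{x\releps\mathbb{N}\mid x=0\land\varphi\}$ for an undecidable proposition $\varphi$; since $A\sub --\{0\}=\{0\}$ we have $A\releps\F(\mathbb{N})$, yet the equality $A\cup -A=\mathbb{N}$ evaluated at $0$ would yield $\varphi\vee\neg\varphi$, so $-$ is not a complement in general and, by Proposition \ref{clasposet}, $\F(X)$ recovers the finite-cofinite Boolean algebra only classically. For the failure of completeness, I would note that if $\F(X)$ were an o-algebra then all set-indexed joins would exist, and by the key lemma every union $\bigcup_{x\releps A}\{x\}=A$ would lie in $\F(X)$, i.e. $\F(X)=\P(X)$; but the set of even numbers is not in $\F(\mathbb{N})$ (being infinite it is not contained in any $--K=K$ with $K$ finite, and it contains no $-K$, as it misses every odd number), so $\F(\mathbb{N})$ is not an o-algebra.
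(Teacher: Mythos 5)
Your proof is correct, and at the one genuinely delicate point it is more careful than the paper's own argument. The paper, having checked that $\F(X)$ is a bounded sublattice of $\P(X)$ containing all singletons, asserts that the restricted relation $\overlap$ \emph{automatically} inherits every axiom of Definition \ref{def. oo-lattice} except density. Read literally this is too strong: splitting of \emph{existing} joins does not pass to sublattices, because a least upper bound computed in the sublattice need not be the union. For instance, in the bounded sublattice of $\P(\mathbb{N})$ whose elements are the finite subsets together with $\mathbb{N}$ itself, the join of the family of even singletons exists and equals $\mathbb{N}$, yet $\{1\}$ overlaps that join while overlapping no member of the family. For lattices closed under $-$ the obvious repair (intersecting a putative join with $-\{x\}$) works classically but constructively yields only $\neg\neg(x\releps\bigcup_i B_i)$, so something extra is really needed. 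Your key lemma --- that a join existing in $\F(X)$ must equal the union, proved via the auxiliary upper bound $U\cup-(K\cup\{y\})$ and disjunctive syllogism, both constructively sound --- is exactly that missing step, and it reduces splitting in $\F(X)$ to splitting in $\P(X)$. The same lemma also firms up the non-completeness claim: the paper merely notes that the union of the even singletons is not in $\F(\mathbb{N})$, tacitly identifying a hypothetical least upper bound in $\F(\mathbb{N})$ with that union, an identification your lemma justifies. Everything else --- closure under $\cup$, $\cap$, $-$, $\rightarrow$, density via singletons --- matches the paper, and your Brouwerian counterexample $A=\{x\releps\mathbb{N}\ |\ x=0\land\varphi\}$ is a legitimate variant of the paper's remark that $\{x\}\cup-\{x\}=X$ would force decidable equality. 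In short: same overall strategy, but your explicit treatment of existing joins supplies (and repairs) the step the paper glosses over.
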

\begin{proof}
$\F(X)$ contains both $\emptyset$ (which is finite) and $X$ (because
$X$ $=$ $-\emptyset$).

$\F(X)$ is closed under union: let $A,B:\F(X)$; if either $A$ or $B$
contains some cofinite subset, then so does $A\cup B$; otherwise,
there exist two finite subsets $K$ and $L$ such that $A\sub--K$ and
$B\sub--L$; so $A\cup B$ $\sub$ $--K\cup--L$ $\sub$ $--(K\cup L)$.

$\F(X)$ is closed under intersection: let $A,B:\F(X)$; if either $A$
or $B$ is contained in some \emph{co}cofinite subset, then so is
$A\cap B$; otherwise, there exist two finite subsets $K$ and $L$
such that $-K\sub A$ and $-L\sub B$; so $-(K\cup L)=-K\cap-L\sub
A\cap B$.

This proves that $\F(X)$ is a sublattice of $\P(X)$. Since $\P(X)$
is an o-algebra, its overlap relation $\overlap$, when restricted to
a sublattice, automatically inherits all properties required by
Definition \ref{def. oo-lattice} but at most density. However, density holds
for $\F(X)$ as well as for any other sublattice of $\P(X)$ which
contains all singletons (as $\F(X)$ clearly does).\footnote{By the way, note that $F(X)$ contains all finite subsets of $X$, as well as all cofinite and also all cocofinite ones.}

$\F(X)$ is closed under implication\footnote{Remember that $\P(X)$
is a Heyting algebra with $A\rightarrow B$ $=$ $\{x\releps X$ $|$
$A\cap\{x\}\sub B\}$.}: let $A,B:\F(X)$; if $-L\sub B$, then $-L\sub A\rightarrow B$
(because $B\sub A\rightarrow B$); if $A\sub --K$, then $-K\sub -A$,
hence $-K\sub A\rightarrow B$ (because $-A\sub A\rightarrow B$);
finally, if $B\sub --L$ and $-K\sub A$, then $-(K\cup L)$ $=$
$-K\cap-L$ $\sub$ $A\cap-B$ $\sub$ $-(A\rightarrow B)$; hence
$A\rightarrow B\sub--(K\cup L)$. So $A\rightarrow B:\F(X)$
in any case.

Thus $\F(X)$ is an o-Ha. Clearly, it is not an o-Ba, in general; for
instance, $\{x\}\cup-\{x\}$ need not be equal to $X$ (unless $X$ has
a decidable equality).

Finally, $\F(X)$ is clearly not complete (it is not an o-algebra):
think of $X$ $=$ $\mathbb{N}$ and consider the elements
$\{2n\}:\F(\mathbb{N})$, for $n\releps\mathbb{N}$; their union is
the set of all even numbers which, of course, does not belong to
$\F(\mathbb{N})$.
\end{proof}

The notion of Boolean algebra seems constructively weaker than its
overlap version: at least, the relation $p\wedge q\neq 0$ (which
seems to be the only possible candidate) fails to be an overlap
relation constructively. In fact, if that were the case, then every Boolean
algebra would have a stable equality \footnote{That means $\neg\neg(x=y)\Rightarrow (x=y)$ for all $x,y$.} (by Lemma \ref{lemma 2}).
A fortiori, one cannot hope to find a general method for endowing a
Heyting algebra with an overlap relation.

Finally, we give an example of oo-lattice which does not seem to be
an o-Heyting algebra. Consider a pure first-order language with
equality and define the smallest class of formulae which contains
atomic formulae and is closed under disjunction, conjunction and
negation. Given a set $X$, the family of all its subsets that can be
obtained by comprehension on those formulae is an
oo-lattice,\footnote{In other words, this is the smallest sub-family
of $\P(X)$ that contains all singletons and is closed under finite
unions, finite intersections and pseudo-complementation.
Classically, this is nothing else but another description of the
Boolean algebra of finite-cofinite subsets.} but there seems to be
no constructive way to define implication.

\section*{Conclusions and future work}

In this paper we have done the following:
\begin{list}{-}{ }


\item we have shown that, classically,  the category of overlap algebras is
equivalent to the category of complete Boolean algebras and
join-preserving maps;

\item we have defined the notion of discrete formal topology and compare it
with the notion of atomic set-based overlap algebra and the categorical description of  discrete locales
in \cite{joyal-tierney};

\item we have shown constructively that the power-collection of a set is the free overlap algebra generated from that set;

\item we have introduced various structures equipped with an overlap relation, called {\it o-structures}, and corresponding morphisms, which generalize Sambin's notion of
overlap algebra and of overlap morphism, respectively;

\item we have shown that the corresponding categories of our o-structures
are all equivalent to the category of Boolean algebras with maps
preserving existing joins.
\end{list}

In the future, we aim to test whether our o-structures can be used
to give a constructive version of Stone representation for (not
necessarily complete) Boolean algebras and to investigate the
existence of constructive join-completions for them.

\section*{Acknowledgements}

We express our gratitude to Giovanni Sambin and Steve Vickers for
very interesting discussions about overlap algebras and their
interplay with Boolean algebras.

\bibliographystyle{plain}
\bibliography{cmt}

\end{document}